\newcommand{\re}{{\mathbb R}}
\newcommand{\n}{{\mathbb N}}
\newcommand{\cA}{{\cal{A}}}
\newcommand{\E}{{\mathbf{E}\, }}
\newcommand{\cK}{{\cal{K}}}
\newcommand{\cB}{{\cal{B}}}
\newcommand{\cM}{{\cal{M}}}
\newtheorem{theorem}{Theorem}
\newtheorem{proposition}{Proposition}
\newtheorem{lemma}{Lemma}
\newtheorem{corollary}{Corollary}
\newtheorem{definition}{Definition}
\newtheorem{remark}{Remark}
\title{Convex Optimization methods for computing  \\ the Lyapunov
Exponent of matrices
\thanks{
The first author is supported by the RFBR grants No 10-01-00293 and No
11-01-00329, and by the grant of Dynasty foundation. This research was carried out while the first author was visiting
Center of Operation Research and Econometrics (CORE) and Universit\'e Catholique de Louvain (Louvain-la-Neuve, Belgium)
in Spring, 2011. That author is grateful to the institute and to the university for their hospitality.
The second author is an F.R.S.-FNRS fellow.}}
\author{V.\,Yu.~Protasov
\thanks{Dept. of Mechanics and Mathematics, Moscow State University,
Vorobyovy Gory, 119992, Moscow, {e-mail: 
v-protassov@yandex.ru.}}
\and R.\,M.~Jungers
\thanks{Universit\'e catholique de Louvain (UCLouvain), ICTEAM institute, 4 avenue Georges Lemaitre, B-1348 Louvain-la-Neuve, Belgium.
e-mail: raphael.jungers@uclouvain.be.}}
\date{}
\begin{document}
\maketitle

\begin{abstract}
We introduce a new approach to evaluate the largest Lyapunov exponent of a family of nonnegative matrices.
The method is based on using special positive homogeneous functionals on $\re^d_+$, which gives iterative lower and upper
bounds for the Lyapunov exponent. They improve previously known bounds and converge
to the real value. The rate of converges is estimated and the efficiency of the algorithm is
demonstrated on several problems from applications (in functional analysis, combinatorics, and language theory) and
on numerical examples with randomly generated matrices. The method computes the Lyapunov exponent with a prescribed accuracy
in relatively high dimensions (up to 60). We generalize this approach to all matrices, not necessarily nonnegative,
derive a new universal upper bound for the Lyapunov exponent, and show that such a lower bound, in general,  does not exist.
\end{abstract}

\section{Introduction}

Let us consider a family $\cA = \{A_1, \ldots , A_m\}$ of linear
operators acting in $\re^d$. To each operator~$A_j$ we associate a positive number~$p_j$
so that $\sum_{j=1}^m p_j= 1$. In the sequel we assume that every family of operators  is
equipped with a family of numbers. Consider a random product  $X_k = A_{d_k}\cdots A_{d_1}$,
where all indices $\{d_j\}$ are independent and identically distributed  random variables; each $d_j$
takes values $1, \ldots , m$ with probabilities $p_1, \ldots , p_m$ respectively. According to
the Furstenberg-Kesten theorem~\cite{FK} the value $\|X_k\|^{1/k}$ converges with probability~$1$
to a number $\rho$, which depends only on the family~$\cA$, i.e., on the operators $\{A_j\}_{j=1}^{m}$  and on the probabilities
$\{p_j\}_{j=1}^m$. The number $\lambda = \log \rho$ is called the {\em largest Lyapunov exponent} of the family~$\cA$.
In this paper we do not deal with other Lyapunov exponents, and, for the sake of simplicity, we  omit the word ``largest''.  This number can be defined by the following
 limit formula
 \begin{equation}\label{LE}
 \lambda \quad = \quad \lim_{k \to \infty} \ \frac1k \ \E \log \ \bigl\| \, A_{d_k}\cdots A_{d_1} \bigr\|\, ,
 \end{equation}
where $\E$ denotes the mathematical expectation. The results of this paper can be extended to more general matrix distributions, but we restrict ourselves to i.i.d. matrices taking values on a finite set $\cA$.

We introduce a new approach for computing the Lyapunov exponent based on using special positive homogeneous functionals
on $\re^d$. The idea is the following: for any such a functional $f$ the minimal and maximal expected value of
$\frac1k\, \log \frac{f(Bx)}{f(x)}\, , \, B \in \cA^k$ over all $x\in \re^d, x \ne 0$, give a lower and upper bound
respectively for $\lambda$. For families of nonnegative matrices those bounds  can be effectively computed and then
optimized over certain families of functionals $f$, which leads to optimal bounds $\beta_k \le \lambda \le \alpha_k$ that, under some general assumptions, rapidly
 converge to $\lambda$ as $k \to \infty$. For every~$k$ both $\alpha_k$ and $\beta_k$ are found by solving unconstrained convex minimization problems. The rate of convergence is proved to be at least linear in~$k$, but  in most of practical examples it is much faster. For dimensions $d$ up to $50$ it usually takes less than $k = 12$ iterations to estimate $\lambda$ with the
 relative precision $1\%$.  All computations take a few minutes on a standard desktop computer.

 In Section~II we describe the new approach for operators with a common invariant cone, then in Section~III we  consider two special families of functionals and the corresponding bounds $\alpha_k$ and~$\beta_k$. In Section~IV it is shown that
 for nonnegative matrices both those bounds can be found and optimized over the corresponding families
 as solutions of certain convex minimization problems. In Theorems~\ref{th10} and~\ref{th20} we prove that under
 some general assumptions on matrices we have ${\alpha_k - \beta_k \, \le \, \frac{C}{k}}$, where
 $C$ is an effective constant. In Section~V this technique
 is extended to all matrices, without the nonnegativity condition. We derive an upper bound
 for $\lambda$, which is sharper than the classical upper bound $\, \frac1k\, \E \, \{\log \|B\|_2 \ | \ B \in \cA^k\}$.
 On the other hand, Theorem~\ref{th30} proved in that section shows that there are no good lower bounds for the Lyapunov exponent of general matrices. Finally, in Section~VI we compute or estimate Lyapunov exponents of special families of matrices arising in problems of functional analysis, combinatorics, and  language theory, and also report numerical results with randomly generated matrices.

Lyapunov exponents of matrices have been studied in the literature in great detail  due to many
applications in probability, ergodic theory, functional analysis, combinatorics, etc.
(see~\cite{O, W, Per, GM, FBS, P5, JPB} and references therein). A special attention has been
paid to the case of nonnegative matrices, i.e., matrices with nonnegative entries~\cite{I, H,  P1}.
The problem of computing or estimating the Lyapunov exponent is known to be extremely hard.
It is even algorithmically undecidable in general~\cite{BT}. Nevertheless, there are several methods
for approximate computation of the Lyapunov exponent that work well in most of practical cases.
These are methods for special families of matrices arising in applications~\cite{FBS, P5, JPB},
for general nonnegative matrices~\cite{Key, GA}, and for general families~\cite{DF1, DF2, BeynL}.
The method proposed in this paper for nonnegative matrices has several advantages compared with those previously known:
1) it produces upper and lower bounds for the Lyapunov exponent $\lambda$ that both converge to
$\lambda$ with a linear rate as the number of iterations grows. So, the Lyapunov exponent is sandwiched between
two values. The rate of convergence is estimated theoretically, but in practice, as we see in numerical examples, it converges much faster. 2) The method works equally well for high dimensions. In examples with randomly generated matrices
of dimension $d \le 50$ it computes $\lambda$ with a relative error less than $1 \%$ within a few iterations.
3) We relax the assumptions on nonnegative matrices imposed in the previous papers on the subject.

The most popular upper bound used in the literature is
\begin{equation}\label{euc}
\frac1k \, \E\ \Bigl\{\, \log \, \|B\| \ \Bigl| \ B \in \cA^k\, \Bigr\}\,
\end{equation}
where $\cA^k$ is the set of all $m^k$ products of matrices of length $k$
(with the corresponding probabilities). For any norm $\|\cdot\|$ this bound converges to $\lambda$ as $k \to \infty$.
Usually one takes the Euclidean norm $\|\cdot\|_2$. As for the lower bounds for nonnegative matrices, the most well-known of them comes from the results of Key~\cite{Key}, which uses the same formula, but with an arbitrary
submultiplicative functional instead of the norm. We shall see that our bounds are closer to the real value of $\lambda$
and have a guaranteed rate of convergence as $k \to \infty$. The theoretical reasons for that are the following:
1) In our bounds, we manage to interchange the Expectation- and Maximum-operations, which results in a smaller upper bound and a larger lower bound.
2) We do not restrict ourself to an a priori fixed functional (or norm), but rather we show how to optimize it over a large family of functionals.

In Section~V we extend this approach to general matrices,
without the nonnegativity assumption, and obtain an upper bound that is better than~(\ref{euc}). We also prove that
such a lower bound for general matrices does not exist.

\section{Operators with a common invariant cone:
 Bounds for the Lyapunov exponent}

Assume that all operators $A_1, \ldots , A_m$ share a common invariant cone~$K \subset \re^d$, which
is supposed to be convex, closed, solid, pointed, and having its apex at the origin.
For any points $x, y \in \re^d$ we write $x \ge y\, $ if $\, x - y \in K$ and
$x - y > 0$ if $x - y \in {\rm int}\, K$.  For an operator~$A$ we write $A \ge 0$
if it leaves the cone $K$ invariant. The same notation are used for the dual cone $K^* =
\{\, v \in \re^d \ | \ \inf_{x \in K}(v, x)\, \ge \, 0\, \}$.

Consider a functional $f : K \to \re_+$. In the sequel we impose the following assumptions on $f$:

1) $f$ is positive, i.e., $f(x) > 0$, whenever $x \ne 0$;

2) $f$ is homogeneous, i.e., $f(tx) = tf(x)$ for any $x \in K, t \in \re_+$.

\smallskip

\noindent Now we define  two values  $F_{\min}$ and $F_{\max}$ for any family
$\cA$ and for any functional $f$:
$$
\begin{array}{lll}
F_{\min}(f, \cA) &  = & \inf\limits_{f(x)  \, = \,
1}\, \E \bigl\{\, \log \, f(Ax)\ \bigl| \ A \in \cA\, \bigr\}\\
{}&{}&{}\\
 F_{\max}(f, \cA) & = & \sup\limits_{f(x)  \, =
\, 1}\, \E \bigl\{\, \log \, f(Ax)\ \bigl| \ A \in \cA\,
\bigr\}\, .
\end{array}
$$
We use the short notation $F_{\min}(f, \cA) = F_{\min}$, and the same with $F_{\max}$,
if the functional $f$ and the family $\cA$ are  fixed.   Denote also $F_{\min}^{(k)} \, = \, \frac1k \, F_{\min}(f, \cA^k)$.
Thus,
$$
F_{\min}^{(k)} \quad =  \quad \frac1k \ \inf\limits_{f(x)  \, = \,
1}\, \E \, \Bigl\{\, \log \, f(Bx)\ \Bigl| \ B \in \cA^k\, \Bigr\},
$$
and similarly with $F_{\max}^{(k)}$. Thus, $F_{\min}^{(k)}$ is the smallest expected value
of the logarithm of the ratio $\frac{f(A_{d_k}\cdots A_{d_1}x)}{f(x)}$ over all $x \in K, x \ne 0$.
Let us first make the following simple observation:
\begin{lemma}\label{l5}
For every natural $k$ and $n$ we have
$$
(k+n)\, F_{\max}^{(k+n)} \ \le \ k\, F_{\max}^{(k)} \ + \ n\, F_{\max}^{(n)}\qquad \mbox{and} \qquad
(k+n)\, F_{\min}^{(k+n)} \ \ge \ k\, F_{\min}^{(k)} \ + \ n\, F_{\min}^{(n)}\, . 
$$
\end{lemma}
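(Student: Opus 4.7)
The plan is to prove the submultiplicative-type estimate for $F_{\max}$ (the companion inequality for $F_{\min}$ is identical with all suprema replaced by infima and $\le$ by $\ge$, so I will treat only the first). The natural idea is to split a product $B \in \cA^{k+n}$ as $B = B_2 B_1$ with $B_1 \in \cA^k$ and $B_2 \in \cA^n$, and to exploit the fact that, because of i.i.d.\ sampling, $B_1$ and $B_2$ are independent.

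First I would fix an arbitrary $x \in K \setminus \{0\}$ with $f(x) = 1$, and use the telescoping identity
$$
\log f(B_2 B_1 x) \ = \ \log f(B_1 x) \ + \ \log \frac{f(B_2 B_1 x)}{f(B_1 x)}.
$$
Here $f(B_1 x) > 0$ follows from the positivity of $f$ on $K$ and the invariance of $K$ under each $A_j$ (so $B_1 x \in K$). The crucial next move is to use the homogeneity of $f$ to rewrite the quotient as $f(B_2 y)$, where $y = B_1 x / f(B_1 x)$ satisfies $f(y) = 1$.

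Now I would take the expectation with respect to both $B_1$ and $B_2$. By independence, the expectation factors as a conditional expectation: for each fixed realization of $B_1$, the inner expectation over $B_2$ is $\E\{\log f(B_2 y) \mid B_2 \in \cA^n\}$, applied to a unit-normalized vector $y$, hence bounded above by $\sup_{f(z)=1} \E\{\log f(B_2 z) \mid B_2 \in \cA^n\} = F_{\max}(f,\cA^n) = n F_{\max}^{(n)}$. Similarly the first term $\E \log f(B_1 x)$ is bounded above by $F_{\max}(f,\cA^k) = k F_{\max}^{(k)}$, since $f(x) = 1$. Adding and then taking the supremum over $x$ with $f(x) = 1$ on the left-hand side yields $(k+n) F_{\max}^{(k+n)} \le k F_{\max}^{(k)} + n F_{\max}^{(n)}$.

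The only delicate point I anticipate is the normalization step: one must verify that $B_1 x \neq 0$ so that $y$ is well-defined, which is immediate once one notes that the functional $f$ is \emph{positive} on $K$ and each $A_j$ preserves $K$ (if $B_1 x = 0$ for some realization, the entire term $\log f(B_1 x)$ blows up to $-\infty$, but this only makes the upper bound easier). The $F_{\min}$ inequality follows from exactly the same decomposition, observing that the inner conditional expectation is now bounded \emph{below} by $n F_{\min}^{(n)}$ and the outer one by $k F_{\min}^{(k)}$, and then taking the infimum over $x$.
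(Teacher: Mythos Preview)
Your proof is correct and follows essentially the same route as the paper's: both arguments telescope $\log f(B_2B_1x)$ into $\bigl(\log f(B_2B_1x)-\log f(B_1x)\bigr)+\bigl(\log f(B_1x)-\log f(x)\bigr)$, use independence of the two blocks to separate the expectation, and bound each piece by the corresponding supremum (respectively infimum). Your explicit normalization $y=B_1x/f(B_1x)$ and your remark on the degenerate case $B_1x=0$ are minor elaborations of what the paper leaves implicit, but the underlying idea is identical.
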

\begin{proof} We have
$$
(k+n)\, F_{\max}^{(k+n)} \ = \ \sup\limits_{x\in K}\, \E \bigl\{\, \log \, f(Bx) \, - \, \log \, f(x)\ \bigl| \ B \in \cA^{k+n}\, \bigr\}
\quad =
 $$
$$
\sup\limits_{x\in K}\, \E \bigl\{\, \bigl(\log \, f(B_1B_2x) \, - \, \log \, f(B_2x)\, \bigr) \ + \
\bigl(\, \log \, f(B_2x)\, - \,   \log \, f(x)\, \bigr)\quad \bigl|\quad \ B_1 \in \cA^{k},\ B_2 \in \cA^{n}\, \bigr\}
\ =
$$
$$
\sup\limits_{x\in K}\ \Bigl[ \ \E \bigl\{\, \log \, f(B_1B_2x) \, - \, \log \, f(B_2x)\ \bigl| \ B_1 \in \cA^{k}\, \bigr\} \ + \ \E \bigl\{\, \log \, f(B_2x) \, - \, \log \, f(x)\ \bigl| \ B_2 \in \cA^{n}\, \bigr\} \, \Bigr] \ \le
$$
$$
\sup\limits_{z\in K} \ \E \bigl\{\, \log \, f(B_1z) \, - \, \log \, f(z)\ \bigl| \ B_1 \in \cA^{k}\, \bigr\}   \ + \             \sup\limits_{x\in K}  \ \E \bigl\{\, \log \, f(B_2x) \, - \, \log \, f(x)\ \bigl| \ B_2 \in \cA^{n}\, \bigr\} \, ,
$$
which completes the proof for $F_{\max}^{(k+n)}$. The proof for $F_{\min}^{(k+n)}$ is the same. \end{proof}
\medskip

\begin{corollary}\label{c10}
For an arbitrary functional~$f$ and for every $k$ we have $\, F_{\min}\, \le \, \, F_{\min}^{(k)}\, $
and $\, F_{\max}\, \ge  \, F_{\max}^{(k)}\, $.
\end{corollary}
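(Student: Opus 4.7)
The plan is to deduce the corollary directly from Lemma~\ref{l5} by a one-line induction on $k$. The key observation is that the bare symbols $F_{\min}$ and $F_{\max}$ coincide with $F_{\min}^{(1)}$ and $F_{\max}^{(1)}$ respectively, since $\cA^1 = \cA$. Once this is noted, the whole content of the corollary is that the sequences $\{F_{\min}^{(k)}\}_{k\ge 1}$ and $\{F_{\max}^{(k)}\}_{k\ge 1}$ are, respectively, bounded below and above by their first term.

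First I would specialise Lemma~\ref{l5} to $n = 1$. This gives
\[
(k+1)\, F_{\max}^{(k+1)} \ \le \ k\, F_{\max}^{(k)} + F_{\max}, \qquad
(k+1)\, F_{\min}^{(k+1)} \ \ge \ k\, F_{\min}^{(k)} + F_{\min}.
\]
Then I would argue by induction on $k$. The base case $k = 1$ is just the identity $F_{\max}^{(1)} = F_{\max}$ (and similarly for the minimum). For the inductive step, if $F_{\max}^{(k)} \le F_{\max}$, substituting this into the first inequality yields $(k+1) F_{\max}^{(k+1)} \le (k+1) F_{\max}$, hence $F_{\max}^{(k+1)} \le F_{\max}$; the proof for $F_{\min}^{(k+1)} \ge F_{\min}$ is the same with inequalities reversed.

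Alternatively, the same conclusion can be reached in one shot by iterating Lemma~\ref{l5}: splitting $k$ into $k$ copies of length~$1$ gives $k\, F_{\max}^{(k)} \le k\, F_{\max}$ and $k\, F_{\min}^{(k)} \ge k\, F_{\min}$, from which the result follows upon dividing by $k$.

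There is essentially no obstacle: the statement is a formal consequence of the subadditivity/superadditivity proved in the preceding lemma, and the only thing to check carefully is the identification of $F_{\min}$ with $F_{\min}^{(1)}$ (and likewise for the maximum), which is immediate from the definitions.
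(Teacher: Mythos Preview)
Your proposal is correct and matches the paper's approach: the paper states Corollary~\ref{c10} immediately after Lemma~\ref{l5} without proof, treating it as a direct consequence of the subadditivity/superadditivity established there, and your argument (identify $F_{\min}=F_{\min}^{(1)}$, $F_{\max}=F_{\max}^{(1)}$, then iterate the lemma or induct on~$k$) is precisely the intended one-line deduction.
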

\begin{lemma}\label{l10}
For an arbitrary functional~$f$ and for any $n$ we have $\ F_{\min}^{(n)}\, \le \, \lambda \, \le \,
F_{\max}^{(n)}$. In particular, $\ F_{\min}\, \le \, \lambda \, \le \,
F_{\max}$.
\end{lemma}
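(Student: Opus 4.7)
The plan is to combine the sub/superadditivity furnished by Lemma~\ref{l5} with Fekete's lemma and a norm-comparison for $f$ on the invariant cone. By Lemma~\ref{l5}, $\{k F_{\max}^{(k)}\}$ is subadditive and $\{k F_{\min}^{(k)}\}$ is superadditive, so Fekete's lemma yields
$$L_{\max} \ :=\ \lim_{k \to \infty} F_{\max}^{(k)} \ =\ \inf_{k \ge 1} F_{\max}^{(k)}, \qquad L_{\min} \ :=\ \lim_{k \to \infty} F_{\min}^{(k)} \ =\ \sup_{k \ge 1} F_{\min}^{(k)}.$$
In particular $F_{\min}^{(n)} \le L_{\min}$ and $F_{\max}^{(n)} \ge L_{\max}$ for every $n$, so it suffices to prove $L_{\min} \le \lambda \le L_{\max}$.

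Since $K$ is closed and solid and $f$ is positive homogeneous (and continuous, as is implicit in the setting of the paper), $f$ is equivalent to a norm on $K$: there exist constants $0 < c_1 \le c_2$ with $c_1 \|x\| \le f(x) \le c_2 \|x\|$ for every $x \in K$. For the upper bound $L_{\min} \le \lambda$, pick any $x_0 \in K \setminus \{0\}$ and bound the infimum defining $F_{\min}^{(k)}$ by its value at $x_0$, using $f \le c_2 \|\cdot\|$:
$$k F_{\min}^{(k)} \ \le\ \E \log \frac{f(Bx_0)}{f(x_0)} \ \le\ \log \frac{c_2 \|x_0\|}{f(x_0)} \ +\ \E \log \|B\|, \qquad B \in \cA^k.$$
Dividing by $k$ and invoking the defining limit $\frac{1}{k} \E \log \|B\| \to \lambda$ gives $L_{\min} \le \lambda$.

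For $L_{\max} \ge \lambda$ I take $x_0 \in \mathrm{int}\, K$ and rely on the geometric fact that there is $c_0 = c_0(x_0) > 0$ such that $\|Bx_0\| \ge c_0 \|B\|$ for every cone-preserving operator $B$. Granting this and using $f \ge c_1 \|\cdot\|$,
$$k F_{\max}^{(k)} \ \ge\ \E \log f(Bx_0) \ -\ \log f(x_0) \ \ge\ \log(c_1 c_0) \ -\ \log f(x_0) \ +\ \E \log \|B\|, \qquad B \in \cA^k,$$
and dividing by $k$ produces $L_{\max} \ge \lambda$.

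The main obstacle is verifying this geometric fact, which is the only place where pointedness and solidity of $K$ are essential. If $\overline{B_r(x_0)} \subset K$ for some $r > 0$, then $x_0 \pm rv \in K$ for every unit vector $v$, hence $Bx_0 \pm rBv \in K$; pairing against any $w \in K^*$ gives $\langle Bx_0, w\rangle \ge r|\langle Bv, w\rangle|$, and taking $\sup_v$ yields $\langle Bx_0, w\rangle \ge r\|B^T w\|$. Because $K$ is pointed, $K^*$ is solid, so every unit direction is a bounded linear combination of elements of $K^*$ of unit norm; therefore $\sup_{w \in K^*,\, \|w\| = 1} \|B^T w\|$ is comparable to $\|B^T\| = \|B\|$, which delivers $\|Bx_0\| \ge c_0 \|B\|$ and completes the plan.
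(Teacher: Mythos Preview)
Your proof is correct and follows essentially the same route as the paper's: both arguments rest on the norm comparison $c_1\|x\|\le f(x)\le c_2\|x\|$, the interior-point estimate $\|Bx_0\|\ge c_0\|B\|$ for $x_0\in{\rm int}\,K$, and passage to the limit in the defining formula~(\ref{LE}). The only cosmetic differences are that the paper reduces to the case $n=1$ via Corollary~\ref{c10} rather than invoking Fekete's lemma explicitly, and it cites~\cite{P4} for the interior-point estimate whereas you supply a direct proof of it.
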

\begin{proof} It suffices to prove that $\ F_{\min}\, \le \, \lambda \, \le \,
F_{\max}$. Then applying this inequality to the family $\cA^n$ and taking into account that
$\lambda(\cA^n) = n \lambda (\cA)$ one obtains $\ F_{\min}^{(n)}\, \le \, \lambda \, \le \,
F_{\max}^{(n)}$.

By the compactness argument it follows that
there are positive constants $C_1, C_2$ such that $C_1 \|x\| \le f(x) \le C_2 \|x\|$. Actually,
these constants are respectively the minimum and the maximum of $f(x)$ on the intersection of the unit sphere
with the cone~$K$.
Applying Corollary~\ref{c10}, we obtain for every $x \in K, x \ne 0$
$$
\frac1k\ \E \Bigl\{\, \log \, \|B\|\, \bigl| \ B \in \cA^{k}\, \Bigr\}\ \ge \
\frac1k\ \E \Bigl\{\, \log \, \|Bx\| \, - \, \log \, \|x\|\ \Bigl| \ B \in \cA^{k}\, \Bigr\}\ \ge
$$
$$
\frac1k\ \E \Bigl\{\, \log \, \frac{C_1}{C_2}\, + \, \log \, f(Bx) \, - \, \log \, f(x)\ \Bigl| \ B \in \cA^{k}\, \Bigr\}\ = \ \frac1k\  \log \, \frac{C_1}{C_2} \ + \ \ F_{\min}^{(k)} \ \ge \ \frac1k\  \log \, \frac{C_1}{C_2} \ + \ F_{\min}\, .
$$
Since $\, \frac1k\, \E \bigl\{\, \log \, \|B\|\, \bigl| \ B \in \cA^{k}\, \bigr\}\, \to \, \lambda\, $ and
$\frac1k\,  \log \, \frac{C_1}{C_2} \, + \, F_{\min} \, \to \, F_{\min}\, $ as $k \to \infty$, we see that
$F_{\min} \le \lambda$.
On the other hand,  the same Corollary~\ref{c10} implies that
$$
\, \frac1k \ \E \Bigl\{\, \log \, f(Bx)\ \Bigl| \ B \, \in \, \cA^k\,
\Bigr\}\quad  \le \quad  F_{\max}
$$
 for every $x \in K$ such that $f(x) = 1$.
Furthermore, for each $x \in {\rm int}\, K$ there is a positive constant $C(x)$ such that for every operator $B$ leaving the  cone $K$ invariant, we have $\|Bx\| \, \ge \, C(x)\|B\|$ (see, for instance,~\cite{P4}). Therefore, $C_2\|B\|\cdot \|x\| \ge f(Bx) \ge C_1C(x)\|B\|$, and
hence
$$
\frac1k\
\E \Bigl\{\, \log \, f(Bx)\ \Bigl| \ B \in \cA^k\,
\Bigr\}\quad \to \quad \lambda\, \qquad \mbox{as} \quad   k \, \to \, \infty \, .
$$
Thus, $\, \lambda \, \le \, F_{\max}$.\end{proof}
\medskip

By Fekete's lemma~\cite{F} for any  sequence of  nonnegative numbers $\{a_k\}_{k \in \n}$
such that \linebreak $(k+n)\, a_{k+n} \, \le \, ka_k + na_n\, , \ k,n \in \n$, the limit $\lim_{k \to \infty}a_k$
exists and equals to $\inf_{k \in \n}a_k$. Similarly, if
$(k+n)\, a_{k+n} \, \ge \, ka_k + na_n\, , \ k,n \in \n$, then  $\lim_{k \to \infty}a_k = \sup_{k \in \n}a_k$.
Applying Lemma~\ref{l5} we see that $\lim_{k \to \infty}F_{\min}^{(k)} = \sup_{k \in \n}F_{\min}^{(k)}$
(denote this limit by $F_{\min}^{(\infty)}$), and $\lim_{k \to \infty}F_{\max}^{(k)} = \inf_{k \in \n}F_{\max}^{(k)}$
(denote this limit by $F_{\max}^{(\infty)}$). Invoking now Lemma~\ref{l10} we obtain the following
\begin{proposition}\label{p10}
For every family $\cA$ and for every functional $f$ we have
\begin{equation}\label{5}
F_{\min}\ \le \ F_{\min}^{(\infty )} \ \le \ \lambda \ \le \ F_{\max}^{(\infty)} \ \le \ F_{\max}\, . 
\end{equation}
\end{proposition}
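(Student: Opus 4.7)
The plan is to assemble the statement directly from the three tools already at hand: Lemma~\ref{l5} (weighted sub/superadditivity), Corollary~\ref{c10} (comparison with the one-step values), and Lemma~\ref{l10} (sandwiching of $\lambda$), together with Fekete's lemma as recalled in the paragraph preceding the proposition. No new construction is needed.

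First, I would handle existence of the limits $F_{\min}^{(\infty)}$ and $F_{\max}^{(\infty)}$. Lemma~\ref{l5} asserts precisely that the sequence $k F_{\max}^{(k)}$ is subadditive and $k F_{\min}^{(k)}$ is superadditive. The sequences $F_{\min}^{(k)}$ and $F_{\max}^{(k)}$ are bounded (for instance, $F_{\min} \le F_{\min}^{(k)} \le F_{\max}^{(k)} \le F_{\max}$, where the outer inequalities come from Corollary~\ref{c10} and the middle one from the fact that $F_{\min}^{(k)}$ and $F_{\max}^{(k)}$ are an infimum and supremum of the same expression over $x$). Hence Fekete's lemma applies and gives
$$
\lim_{k \to \infty} F_{\max}^{(k)} \ = \ \inf_{k \in \n} F_{\max}^{(k)} \ = \ F_{\max}^{(\infty)}, \qquad \lim_{k \to \infty} F_{\min}^{(k)} \ = \ \sup_{k \in \n} F_{\min}^{(k)} \ = \ F_{\min}^{(\infty)}.
$$

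Second, I would plug these identifications into the two-sided bound of Lemma~\ref{l10}. That lemma yields $F_{\min}^{(n)} \le \lambda \le F_{\max}^{(n)}$ for every $n$. Taking the supremum over $n$ on the left and the infimum over $n$ on the right, and substituting the identifications from the previous step, produces the middle pair of inequalities $F_{\min}^{(\infty)} \le \lambda \le F_{\max}^{(\infty)}$.

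Finally, the outer inequalities of~(\ref{5}) are immediate from Corollary~\ref{c10}: the bound $F_{\min} \le F_{\min}^{(k)}$ holds for every $k$, so passing to the supremum gives $F_{\min} \le F_{\min}^{(\infty)}$, and symmetrically $F_{\max}^{(\infty)} \le F_{\max}$. This closes the chain in~(\ref{5}). There is no real obstacle here; the only point to check carefully is that Fekete's lemma is legitimately applicable (the hypothesis stated in the preceding paragraph is phrased for nonnegative sequences, but the standard statement only needs boundedness from below, which we have).
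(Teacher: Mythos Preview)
Your proposal is correct and follows essentially the same route as the paper: use Lemma~\ref{l5} plus Fekete's lemma to identify $F_{\min}^{(\infty)}=\sup_k F_{\min}^{(k)}$ and $F_{\max}^{(\infty)}=\inf_k F_{\max}^{(k)}$, then invoke Lemma~\ref{l10} for the middle inequalities; the outer inequalities are immediate since $F_{\min}=F_{\min}^{(1)}$ and $F_{\max}=F_{\max}^{(1)}$ are terms of the respective sequences (your appeal to Corollary~\ref{c10} is equivalent). Your remark that the nonnegativity hypothesis in the quoted form of Fekete's lemma is inessential here---boundedness suffices, and you have it from $F_{\min}\le F_{\min}^{(k)}\le F_{\max}^{(k)}\le F_{\max}$---is a point the paper glosses over, so if anything you are slightly more careful.
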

Thus,  to approximate the Lyapunov exponent one can take an arbitrary functional $f$ and get the values
$F_{\min}$ and $F_{\max}$ as a lower and upper bound respectively. Iterating, one obtains
the bounds $F_{\min}^{(k)}$ and  $F_{\max}^{(k)}$, which, by Corollary~\ref{c10}, are, at least, not worse.
If the two inner inequalities in~(\ref{5}) become equalities, then $F_{\min}^{(k)}$ and  $F_{\max}^{(k)}$
converge from different sides to $\lambda$, which allows us to compute~$\lambda$ with an arbitrary
prescribed accuracy. Sufficient conditions for that are given in Theorem~\ref{th5} below.
Finally, in the ideal case, when $F_{\min} = F_{\max}$, all the inequalities in~(\ref{5}) become equalities.
In this case we get a sharp value of $\lambda$ immediately, just by evaluating $F_{\min}$. Such ``ideal''
functionals $f$ are called {\em invariant}.
\begin{definition}\label{d10}
A functional $f$ is called invariant for a family $\cA$ if
$$
\, - \, f(x)\quad +\quad
\E \ \Bigl\{\, \log \, f(Ax)\ \Bigl| \ A \in \cA\, \Bigr\}\  \equiv \ {\rm const}\,  \qquad  \forall \ x \in K\setminus \{0\}\, .
$$
\end{definition}
Thus, $f$ is invariant if and only if $\, F_{\min} = F_{\max}$. In view of Lemma~\ref{l10} both these
values equal to $\lambda$.
\begin{corollary}\label{c20}
For any invariant functional~$f$ the constant in Definition~\ref{d10} is equal to the Lyapunov exponent $\lambda$
of $\cA$.\end{corollary}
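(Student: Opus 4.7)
The plan is to observe that the corollary follows almost immediately from Definition~\ref{d10} combined with Lemma~\ref{l10}, so the ``proof'' is really just an unpacking of notation.

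First I would read the invariance condition as stating that the function $x \mapsto \E\{\log f(Ax) \mid A \in \cA\} - \log f(x)$ is constant on $K\setminus\{0\}$ (interpreting the $-f(x)$ in Definition~\ref{d10} as $-\log f(x)$, which is what makes the condition scale-invariant and consistent with the sentence ``$f$ is invariant if and only if $F_{\min}=F_{\max}$''). Call this constant $c$. By positive homogeneity of $f$, it suffices to evaluate it on the slice $\{f(x)=1\}$, on which $\log f(x)=0$ and the expression reduces to $\E\{\log f(Ax) \mid A \in \cA\}$.

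Next I would take the infimum and supremum over $\{x \in K : f(x)=1\}$ of this quantity. Since it is identically equal to $c$ on that slice, both extrema coincide with $c$, yielding $F_{\min}(f,\cA) = F_{\max}(f,\cA) = c$. Then I would invoke Lemma~\ref{l10}, which sandwiches $\lambda$ between $F_{\min}$ and $F_{\max}$. The sandwich collapses, forcing $\lambda = c$, which is precisely the constant appearing in Definition~\ref{d10}.

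There is essentially no obstacle here: the argument is a two-line chain once the invariance identity is read on the level set $\{f(x)=1\}$. The only subtlety worth flagging in the write-up is justifying that Lemma~\ref{l10} applies, i.e., that the functional $f$ from Definition~\ref{d10} satisfies the standing hypotheses (positivity and homogeneity) used in Lemma~\ref{l10}; this is automatic since those hypotheses are imposed globally on all functionals considered in Section~II.
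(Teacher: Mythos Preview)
Your proposal is correct and matches the paper's approach exactly: the paper dispatches the corollary in the single sentence preceding it, observing that invariance means $F_{\min}=F_{\max}$ and then invoking Lemma~\ref{l10} to force both to equal $\lambda$. Your observation that the $-f(x)$ in Definition~\ref{d10} must be read as $-\log f(x)$ is also well taken; the homogeneity argument you sketch shows the literal formula cannot be constant on all of $K\setminus\{0\}$, and the paper's own gloss (``$f$ is invariant if and only if $F_{\min}=F_{\max}$'') confirms the intended reading.
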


Certainly, invariant functionals do not exist  for all families that have invariant cones. For nonnegative matrices sufficient conditions were obtained in~\cite{P1}, we shall cite that result in Theorem~A (Section~IV).
However, even if an invariant functional exists, it may be very difficult to find or to approximate. Nevertheless, as the following
theorem says, the very existence of an invariant functional guarantees that for an arbitrary functional~$f$
the values $F_{\min}^{(k)}$ and  $F_{\max}^{(k)}$ both converge to $\lambda$ with the linear rate.
\begin{theorem}\label{th5}
For an arbitrary family $\cA$ and for every functional~$f$ we have  $F_{\max}^{(\infty)}\, = \, \lambda$.

If, in addition,
there is an invariant functional for this family, then for every functional~$f$ we have  $F_{\min}^{(\infty)}\, = \, \lambda$.
In this case
$$
 F_{\max}^{(k)} \ - \ F_{\min}^{(k)}\quad \le \quad C\, k^{-1}\, , \qquad k \in \n\, ,
$$ where the constant~$C$ depends only on $\cA$ and on $f$.
\end{theorem}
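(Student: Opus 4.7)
The plan is to prove the three assertions sequentially, driven throughout by a single mechanism: any two positive, continuous, positively homogeneous functionals on $K$ are comparable up to multiplicative constants on the compact intersection of $K$ with the unit sphere, so swapping one for the other shifts $\E \log (h(Bx)/h(x))$ by an additive $O(1)$ term and hence shifts $F_{\max}^{(k)}$ and $F_{\min}^{(k)}$ by $O(1/k)$.

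For the first identity $F_{\max}^{(\infty)} = \lambda$, the direction $F_{\max}^{(\infty)} \ge \lambda$ is immediate from Lemma~\ref{l10} applied at every $n$. For the reverse, I would reuse the constants $C_1, C_2$ from the proof of Lemma~\ref{l10} to write $f(Bx)/f(x) \le (C_2/C_1)\|B\|$, take expectation and supremum over $x \in K$, and obtain
\[
F_{\max}^{(k)} \;\le\; \frac{1}{k}\log\frac{C_2}{C_1} \;+\; \frac{1}{k}\,\E\bigl\{\log\|B\|\,\bigl|\,B\in\cA^k\bigr\}.
\]
Letting $k \to \infty$, the right-hand side tends to $\lambda$ by the Furstenberg-Kesten theorem, so $F_{\max}^{(\infty)} \le \lambda$.

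For the remaining assertions, suppose $g$ is an invariant functional for $\cA$. The key preliminary observation is that for $g$ itself one has the exact identity $F_{\min}^{(k)}(g, \cA) = F_{\max}^{(k)}(g, \cA) = \lambda$ for every $k \in \n$. I would establish this by telescoping the one-step invariance identity $\E\log g(Ax) = \log g(x) + \lambda$ (Definition~\ref{d10}, Corollary~\ref{c20}) along the factorization $B = A_{d_k}\cdots A_{d_1}$, applying the tower property and the independence of the indices $d_j$ so that each of the $k$ telescoped increments contributes exactly $\lambda$ in expectation, independent of $x$. With this in hand I compare the given $f$ with $g$: by the same compactness argument there exist $C_1', C_2' > 0$ with $C_1' g(x) \le f(x) \le C_2' g(x)$ on $K$. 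Writing $D := \log(C_2'/C_1')$, the bound $f(Bx)/f(x) \ge (C_1'/C_2')\, g(Bx)/g(x)$ yields
\[
F_{\min}^{(k)}(f, \cA) \;\ge\; F_{\min}^{(k)}(g, \cA) \;-\; \frac{D}{k} \;=\; \lambda - \frac{D}{k},
\]
and combined with $F_{\min}^{(\infty)} \le \lambda$ from Lemma~\ref{l10} this gives $F_{\min}^{(\infty)} = \lambda$. The reverse comparison $f(Bx)/f(x) \le (C_2'/C_1')\,g(Bx)/g(x)$ similarly produces $F_{\max}^{(k)}(f, \cA) \le \lambda + D/k$; subtracting yields the quantitative estimate $F_{\max}^{(k)} - F_{\min}^{(k)} \le 2D/k$, so one may take $C = 2D$.

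The main obstacle I foresee is minor but worth flagging: the listed axioms for $f$ only require positivity and positive homogeneity, while the compactness argument producing the constants $C_1, C_2, C_1', C_2'$ implicitly relies on continuity of $f$ on $K \setminus \{0\}$. Continuity is already tacitly used in the proof of Lemma~\ref{l10}, so I would adopt it as a standing hypothesis. Beyond that, the only non-routine step is the telescoping of the one-step invariance into a $k$-step identity, which becomes transparent once the conditional expectations are written out using the independence of the $d_j$.
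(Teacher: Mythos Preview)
Your proposal is correct and follows essentially the same route as the paper: comparability of $f$ with $\|\cdot\|$ for the first assertion, and comparability of $f$ with the invariant functional for the second, each producing an $O(1/k)$ shift. The only notable difference is that where you establish $F_{\min}^{(k)}(g,\cA)=F_{\max}^{(k)}(g,\cA)=\lambda$ by telescoping the one-step identity through the tower property, the paper gets this for free by sandwiching: Corollary~\ref{c10} gives $\lambda=F_{\min}(g)\le F_{\min}^{(k)}(g)$ and $F_{\max}^{(k)}(g)\le F_{\max}(g)=\lambda$, while Lemma~\ref{l10} gives $F_{\min}^{(k)}(g)\le\lambda\le F_{\max}^{(k)}(g)$, so all four quantities coincide without any conditional-expectation bookkeeping.
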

\begin{proof} There are positive constants $C_1, C_2$ such that $C_1 \|x\| \le f(x) \le C_2 \|x\|$.
Taking the operator norm $\|B\| = \max_{\|x\|=1}\|Bx\|$ and using the fact that the mean of maxima
is bigger than or equal to the maximum of means, we obtain for every $x \in K, \|x\| = 1$
$$
\frac1k\ \E \Bigl\{\, \log \, \|B\|\, \bigl| \ B \in \cA^{k}\, \Bigr\}\ \ge \
\frac1k\ \max_{\|x\|=1} \ \E \Bigl\{\, \log \, \|Bx\|\ \Bigl| \ B \in \cA^{k}\, \Bigr\}\ \ge
$$
$$
\frac1k\ \max_{f(x)=1} \ \E \Bigl\{\, \log \, \frac{C_1}{C_2}\, + \, \log \, f(Bx) \ \Bigl| \ B \in \cA^{k}\, \Bigr\}\ = \ \frac1k\  \log \, \frac{C_1}{C_2} \ + \ F_{\max}^{(k)}\, .
$$
Taking limit as $k \to \infty$, we get $\lambda \ge F_{\max}^{(\infty)}$. Comparing with~(\ref{5}) we see that
$\lambda = F_{\max}^{(\infty)}$.

Let $\tilde f$ be an invariant functional for~$\cA$. By the compactness argument, for an arbitrary functional
 $f$ on $K$ there are positive constants $C_1, C_2$ such that $C_1 \tilde f (x) \le f(x) \le C_2 \tilde f (x), \, x \in K$.
 Therefore,
 $$
F_{\min}^{(k)}(f, \cA) \quad \ge \quad F_{\min}^{(k)}(\tilde f, \cA) \ + \ \frac1k\, \log \, \frac{C_1}{C_2}\quad =
\quad \lambda \ + \ \frac1k\, \log \, \frac{C_1}{C_2}\, .
 $$
In the same way we show that $F_{\max}^{(k)} \, \le \, \lambda \, + \, \frac1k\, \log \, \frac{C_2}{C_1}$, and hence
\begin{equation}\label{diff}
F_{\max}^{(k)} \ - \ F_{\min}^{(k)}\quad \le \quad 2\left( \log\, {C_2} \, - \, \log\, {C_1}\right)\, k^{-1}\, .
\end{equation}

Taking limit as $k \to \infty$, we obtain $\, F_{\min}^{(\infty )}\,  = \, F_{\max}^{(\infty )}$, which
completes the proof.\end{proof}
\medskip

Thus, for every functional $f$ we have $F_{\max}^{(k)} \, \to \, \lambda$.
If the family $\cA$ possesses an invariant functional on the cone~$K$, then for every functional~$f$
the values $F_{\min}^{(k)}$ and  $F_{\max}^{(k)}$ converge from two sides to the Lyapunov exponent~$\lambda$,
and the distance between them decays as $C\, k^{-1}$.
This provides a theoretical opportunity to compute the Lyapunov exponent with a given precision, using an arbitrary functional~$f$. To realize this idea we need to compute the values $F_{\max}^{(k)}$ and $F_{\min}^{(k)}$
for large $k$. Each computation actually requires the resolution of an optimization problem, for which one needs to find a global
optimum of the function $\, \psi_k(x) = \frac1k\ \E \bigl\{\, \log \, \frac{f(Bx)}{f(x)} \, \bigl| \ B \in \cA^{k}\, \bigr\}, $ on the cone~$K$. Therefore, the functional~$f$ should be chosen in a special way, to obtain
the objective function~$\psi_k(x)$ convenient for global minimizing/maximizing.
 In the next section we define
two families of functionals~$f$ (each depending on one $d$-dimensional  parameter), and then, in Section~IV,
we apply them for the cone $K = \re^d_+$ (i.e., for the case of nonnegative matrices). Those functionals
will allow us not only to evaluate the lower and upper bounds for $\lambda$, but also to optimize these bounds
over all values of the parameters. This leads to a fast  algorithm for computing the Lyapunov exponent~$\lambda$
of nonnegative matrices (Section~IV). Even in relatively high dimensions (up to $60$) that algorithm computes the Lyapunov
exponent with a good precision (the relative error is less than $1\%$). The corresponding numerical examples from applications and some results with randomly generated matrices are given in Section~VI. Then, making use of the semidefinite  lifting technique, we partially extend our technique to general matrices, without the nonnegativity assumption. Applying  a special functional~$f$ on the cone of positive semidefinite matrices, we obtain an  upper bound for~$\lambda$, which is, at least, not worse
than the usual upper bound~(\ref{euc}) with the Euclidean norm (Section~V). In practice it is much more efficient,
which is confirmed by numerical examples in Section~VI. As for effective lower bounds, it is shown in Section~V that
they actually do not exist for general matrices. This explains well-known negative theoretical results
on the Lyapunov exponent computation~\cite{BT}.
\begin{remark}\label{r4}
{\em We have seen that for every functional $f$ the value $F_{\max}^{(\infty)}$ actually coincides with the
  Lyapunov exponent. However, for $F_{\min}^{(\infty)}$ this is, in general,  not the case, unless the family
  $\cA$ possesses an invariant functional. The main problem, therefore,
   is the lower bound for the Lyapunov exponent. In Section~V we shall see examples of matrix families that
   have no functionals $f$ such that $F_{\min}^{(k)} \to \lambda$ as $\, k \to \infty$.  That is why the existence of
   an invariant functional is crucial for deriving lower bounds that converge to the Lyapunov exponent. }
\end{remark}

\section{Two special functionals~$\mathbf{f (x)}$}

In this section we define two families of functionals~$f$, which then will be applied to compute the Lyapunov exponent of
nonnegative matrices.

Let $\cA$ be an arbitrary finite family of matrices sharing an invariant cone~$K$.   For every point $x > 0$
consider the functional $f(\cdot)=r_x(\cdot)$  defined on $K$ as follows:
\begin{equation}\label{r}
\, r_x(y)\quad = \quad \min\ \Bigl\{\, r > 0\ \Bigl| \, y \, \le \,
r\, x\ \Bigr\}.
\end{equation}
Geometrically, this functional is a norm on K, whose unit ball is $K\cap (x-K)$. If $r_x(y) \le 1\, ,$ then
$y \le x$, therefore $Ay \le Ax$ for any operator~$A \ge 0$,
and hence $r_x(Ay)\, \le \, r_x(Ax)$. Consequently, for this functional we have
$$
F_{\max}\ = \  \max_{y\, > \, 0}\, \E \ \left\{\ \log \
\frac{r_x(Ay)}{r_x(y)}\quad \Bigl| \quad A \in \cA\, \right\}\ = \
\E \ \Bigl\{\, \log \, r_x(Ax)\ \bigl| \ A \in \cA\ \Bigr\}\, .
$$

Let us denote
$$
\alpha(x)\quad = \quad \E \ \Bigl\{\, \log \, r_x(Ax)\ \bigl| \ A
\in \cA\ \Bigr\} \ ; \qquad \alpha \quad = \quad \inf_{\, x\, >\,
0\, }\ \alpha\, (x)\, .
$$
Similarly we define $\alpha_k(x) = F^{(k)}_{\max}\, $ and $\, \alpha_k \, = \, \inf_{\, x\, >\,
0\, }\ \alpha_k\, (x)$.
 Applying now Lemma~\ref{l10}, we conclude that
$\alpha_k(x) \ge \lambda$ for each $x > 0$ and $k \in \n$, and therefore $\, \alpha_k\, \ge \, \lambda$.
\smallskip

To obtain a lower estimate for  $\lambda$ we take arbitrary $v \in {\rm int}\, K^*$ and
consider the linear functional  $f(x) = (v,x)$. 
Again, this functional is a norm on $K,$ whose unit ball is the intersection of $K$ with a half-space. 
For this functional we have 
$$
F_{\min}\ = \ \inf_{x \, \in \, K}\,  \E \
\left\{\ \log \ \frac{(v, Ax)}{(v,x)}\quad \Bigl| \quad A \in
\cA\, \right\}\, .
$$
Denote
$$
\beta(v)\ = \ \inf_{x \in K}\,  \E \
\left\{\ \log \ \frac{(v, Ax)}{(v,x)}\quad \Bigl| \quad A \in
\cA\, \right\}\ ; \qquad \beta \  = \ \sup_{v \, \in \, {\rm int}\, K^*}\, \beta (v)\, . 
$$
Similarly we define $\beta_k(v) = F^{(k)}_{\min}\, $ and $\, \beta_k \, = \, \inf_{\, v\, >\,
0\, }\ \beta_k\, $.  Lemma~\ref{l10} now implies that $\, \beta_k (v) \,
\le \, \lambda \, $ for every $v \in {\rm int K^*}$ and $k \in \n$.

Thus, we have the following bounds for the Lyapunov exponent of a
matrix possessing an invariant cone:
\begin{equation}\label{E}
\beta_k(v) \quad \le \quad \lambda \quad \le \quad \alpha_k(x).
\end{equation}

In general, they are not easy to compute.
For instance, to evaluate $\beta(v)$ we need to find the global minimum over~$x \in K$ of the function
$$
\E \ \left\{\ \log \ \frac{(v, Ax)}{(v,x)}\quad \Bigl| \quad A \in
\cA\, \right\} \quad = \quad - \log \, (v,x)\ + \  \sum_{j=1}^{m}\ p_j\, \log (v, A_jx)\, .
$$
This function is not convex in~$x$, it is actually quasiconcave, and hence its minimization
may be hard. Nevertheless, we shall see in Section~IV that in case $K = \re^d_+$ the value
$\beta_k(v)$ is not only computable, but can be efficiently optimized over all $v \in {\rm int}\, K^*$, and
the same  is for $\alpha_k(x)$. If we work with a general cone~$K$, then it is more convenient to
apply the linear functional $f(x) = (v,x)$ to get not a lower bound (as $\beta_k(v)$), but the upper one.
Doing so, we write $F_{\max}$ for the functional $f(x) = (v,x)$ and obtain
\begin{equation}\label{gamma1}
\gamma(v) \ = \ \max_{x \in K, \, (v,x) = 1}\  \sum_{j=1}^{m}\ p_j\, \log \, (v, A_j\, x)\, .
\end{equation}
The objective  function $\, \psi(x) = \sum_{j=1}^{m}\ p_j\, \log (v, A_jx)\, $ is concave, and hence its maximum on the
convex set $\{x \in K \ | \ (v,x) = 1\}$ can be efficiently found.
The same can be done for every~$k$:
\begin{equation}\label{gamma2}
\gamma_k(v) \ = \ \max_{x \in K, \, (v,x) = 1}\  \E \
\Bigl\{\ \log \ (v, Bx)\quad \Bigl| \quad B \in
\cA^k\, \Bigr\} \, .
\end{equation}
The shortcoming of this estimate
is that it is very hard to minimize over the set $v \in {\rm int}\, K^*,$ even in the case $K = \re^d_+$.
Nevertheless, choosing appropriate $v$ one can obtain good upper bounds $\gamma_k(v)$ that converge fast to
$\lambda$ as $k \to \infty$. We use this bound in Section~V for approximating  Lyapunov exponents of general sets
of matrices.

\bigskip

\section{The Lyapunov exponent of nonnegative matrices}

We are going to see that in case $K = \re^d_+$, i.e., when all the operators $A_j$
are written by nonnegative matrices,  both estimates $\alpha_k$ and $\beta_k$ are efficiently
computable. We only show here how to evaluate $\alpha$ and $\beta$, since the computation  of
$\alpha_k$ and $\beta_k$ is the same with replacing the family~$\cA$ of $m$
matrices by the family $\cA^k$ of all their $m^k$ products of length~$k$.

We begin with $\alpha$. Let us first note that $r_x(y)\, = \, \max\limits_{i=1, \ldots , d}\, \frac{y_i}{x_i}$. Therefore,
$$
\alpha(x)\quad = \quad \E \ \left\{\ \log \ \Bigl(\ \max_{i=1, \ldots , d}\ \frac{(Ax)_i}{x_i}\ \Bigr)\ \Bigl| \ A
\in \cA\ \right\} \, .
$$
Changing the variables, $\, x_i \, = \, e^{\, u_i}\, , \ u_i \in \re$, we get
$$
\alpha(u)\quad = \quad \E \ \left\{\ \log \ \Bigl(\ \max_{i=1, \ldots , d}\, \sum_{j=1}^d\, a_{ij}\, e^{\, u_j \, - \, u_i}\ \Bigr)\ \Bigl| \ A
\in \cA\ \right\} \, .
$$
Interchanging $\log$ and $\max$ we write
$$
\alpha(u)\quad = \quad \E \ \left\{\ \max_{i=1, \ldots , d}\
\log \ \Bigl( \ \sum_{j=1}^d\ a_{ij}\, e^{\, u_j \, - \, u_i}\, \Bigr)\ \Bigl| \ A
\in \cA\ \right\} \, .
$$
Observe that the function
$\, \log \, \bigl(\, \sum_{j=1}^d\, a_{ij}\, e^{\, u_j \, - \, u_i}\, \bigr)\, $
is convex in $u$. The maximum of convex
functions is convex. Therefore, the value $\alpha$ is a solution of the following convex minimization problem:
\begin{equation}\label{alpha}
\alpha\quad = \quad \inf_{u \, \in \, \re^d}\ \E \ \left\{\ \max_{i=1, \ldots , d}\
\log \ \Bigl( \ \sum_{j=1}^d\, a_{ij}\, e^{\, u_j \, - \, u_i}\, \Bigr)\ \Bigl| \ A
\in \cA\ \right\} \, .
\end{equation}

\medskip

Let us now compute $\beta$. We have
$$
\beta(v)\ = \ \inf_{x \, \ge \, 0\, , \, (v,x)=1}\,  \E \
\left\{\ \log \ (v, Ax) \quad \Bigl| \quad A \in
\cA\, \right\}\, .
$$
Thus,  $\beta(v)$ is the minimal value of the concave function $\, \E \
\left\{\ \log \ (v, Ax) \quad \Bigl| \quad A \in
\cA\, \right\}\, $ on the simplex~$\{x\, \ge \, 0\, , \, (v,x)=1\}$. This minimal value is attained at an extreme point,
i.e., at a  vertex of the simplex. Since its vertices are the vectors of the canonical basis, we have
$$
\beta(v)\quad = \quad \min_{j=1, \ldots , d}\ \left( \ - \, \log \, (v, e_j) \ +  \  \E \
\Bigl\{\ \log \ (v, Ae_j) \ \Bigl| \ A \in
\cA\, \Bigr\}\ \right)\, .
 $$
 Since $\,(v, e_j)\, = \, v_j\, $ and $\,  (v, Ae_j)\, = \, (v, a^j)$, where $a^j$ is the~$j$th column of the matrix~$A$, we obtain
\begin{equation}\label{betapos}
\beta(v)\quad = \quad \min_{j=1, \ldots , d}\ \left( \ -\log \, v_j\  + \ \E \
\Bigl\{\ \log \ (v, a^j) \quad \Bigl| \quad A \in
\cA\, \Bigr\}\ \right)\, .
\end{equation}
For any $j$ and for any $c \in \re$ the set of solutions $ v> 0$ of the inequality
$$
-\log \, v_j\  + \ \E \
\left\{\ \log \ (v, a^j) \quad \Bigl| \quad A \in
\cA\, \right\}\quad \ge \quad c
$$
coincides with the set of solutions of the inequality
$$
{\rm exp}\ \left[\ \E \
\left\{\ \log \ (v, a^j) \quad \Bigl| \quad A \in
\cA\, \right\}  \ \right] \quad = \quad \prod_{i=1}^m \, (v, A_ie_j)^{\, p_i} \quad \ge \quad e^c\, v_j\, ,
$$
which is convex, because the function $\prod_{i=1}^m \, (v, A_ie_j)^{\, p_i}$ is  concave in $v$.
Hence, for any $j$ the function
$\, -\log \, v_j\,  + \,  \E \,
\bigl\{\ \log \ (v, a^j) \ \Bigl| \ A \in
\cA\, \bigr\}\, $ is quasiconcave in $v$, and therefore $\beta(v)$ is quasiconcave as well,
as a minimum of quasiconcave functions. Thus, $\beta$ is the solution of the following
quasiconcave maximization problem:
\begin{equation}\label{beta}
\beta \quad = \quad \sup_{v\, > \, 0}\quad   \min_{j=1, \ldots , d}\quad \left(\  -\log \, v_j\  + \ \ \E
\ \Bigl\{\ \log \ \Bigl( \, \sum_{i=1}^d\, a_{ij}\, v_i \, \Bigr)\quad \Bigl| \quad A \in
\cA\, \Bigr\}\ \right)\, .
\end{equation}
Let us remember that for each $k$ the values  $\alpha_k$ and $\beta_k$ are defined by formulas~(\ref{alpha}) and
(\ref{beta}) multiplied by $\frac1k$ and with replacing $\cA$ by $\cA^k$. Similarly for the values $\alpha_k(x)$ and
$\beta_k(v)$. Thus, for every vectors $x , v > 0$ we have the following inequality:
\begin{equation}\label{k}
 \ \beta_k(v) \quad \le \quad \lambda \quad \le \quad \ \alpha_k(x)\, , \qquad k \in \n\, .
\end{equation}

\bigskip

\subsection{Two conditions for nonnegative matrices}

The bounds $\alpha_k(v)$ and $\beta_k(x)$ are derived for all families of nonnegative matrices,
and inequality~(\ref{k}) always holds. The question, however, is do
they provide really effective estimations for the Lyapunov exponent,  i.e., do they converge to $\lambda$ as
$k \to \infty$ ? It appears that the answer is affirmative, whenever the matrices $A_j$ are not ``too sparse''.
More precisely, the family $\cA$ satisfies the following two assumptions:
\medskip

\textbf{(a)}  there is at least one strictly positive product of matrices from~$\cA$ (with repetitions permitted);

\textbf{(b)} matrices from $\cA$ do not have zero rows nor zero columns.

\medskip

\noindent Note that conditions \textbf{(a)} and \textbf{(b)} are assumed in most of papers studying random products of
nonnegative matrices~(see \cite{W, I, H, Key, P1}). We shall see that condition \textbf{(a)} can always be omitted,
 but the situation with condition~\textbf{(b)} is more complicated.

 Let us recall that  (Theorem~\ref{th5}), if there is an invariant functional for the family~$\cA$, then for \emph{any}
other functional $f$ the bounds $F^{(k)}_{\min}$ and $F^{(k)}_{\max}$ converge linearly to the Lyapunov exponent.
\smallskip

\noindent \textbf{Theorem~A}~\cite{P1}. {\em For every family of nonnegative matrices satisfying conditions
\textbf{(a)} and~\textbf{(b)} there exists an invariant functional on~$\re^d_+$. This functional is, moreover, concave and
monotone on the cone~$K$.}
\smallskip

Applying now Theorem~\ref{th5} to the functionals $f(\cdot) = r_x(\cdot )$ and $f(\cdot ) = (v, \cdot)$, we arrive at
\begin{theorem}\label{th10}
 For every family of nonnegative matrices satisfying
\textbf{(a)} and \textbf{(b)}, and for every vectors $x,v > 0$
we have $\, \beta_k(v) \, \le \,  \lambda \, \le \,   \alpha_k(x)\, , \ k \in \n\, ,$ and
$$
\alpha_k(x)  \ - \ \beta_k(v) \quad \le \quad C\, k^{-1}\, , \qquad k \in \n\, ,
$$
where the constant $C$ depends on $\cA, x$ and $v$.
\end{theorem}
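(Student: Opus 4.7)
The plan is to deduce Theorem~\ref{th10} as a direct corollary of Theorem~A together with Theorem~\ref{th5}. Theorem~A supplies an invariant functional $\tilde f$ on $\re^d_+$ for any nonnegative family satisfying \textbf{(a)} and \textbf{(b)}, which is precisely the hypothesis feeding the second, quantitative half of Theorem~\ref{th5}. Since $\alpha_k(x)$ and $\beta_k(v)$ were introduced in Section~III as the instances of $F_{\max}^{(k)}$ and $F_{\min}^{(k)}$ corresponding respectively to the functionals $r_x(\cdot)$ and $(v, \cdot)$, and inequality~(\ref{k}) already records the sandwich $\beta_k(v) \le \lambda \le \alpha_k(x)$, only the rate $C\,k^{-1}$ remains to be established.

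First I would check that the two functionals of interest legitimately fall under the framework of Section~II. For $x > 0$ the Minkowski gauge $r_x$ is a positive, positively homogeneous norm on $K = \re^d_+$; for $v \in \mathrm{int}\, K^*$ the linear form $y \mapsto (v, y)$ is positive on $K \setminus \{0\}$ and homogeneous. Hence Theorem~\ref{th5} is applicable to each of them. Then I would repeat the compactness step from the proof of Theorem~\ref{th5}: since $\tilde f$, $r_x$ and $(v, \cdot)$ are all continuous and strictly positive on the compact cross-section of $K$ by the unit sphere, there exist positive constants $c_1(x), c_2(x), c_1(v), c_2(v)$ with
$$
c_1(x)\, \tilde f(y) \ \le \ r_x(y) \ \le \ c_2(x)\, \tilde f(y), \qquad c_1(v)\, \tilde f(y) \ \le \ (v, y) \ \le \ c_2(v)\, \tilde f(y), \qquad y \in K.
$$
Applying Theorem~\ref{th5} to $f = r_x$ gives $\alpha_k(x) \le \lambda + \frac{1}{k}\log\bigl(c_2(x)/c_1(x)\bigr)$, and applying it to $f = (v, \cdot)$ gives $\beta_k(v) \ge \lambda + \frac{1}{k}\log\bigl(c_1(v)/c_2(v)\bigr)$. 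Subtracting yields
$$
\alpha_k(x) \ - \ \beta_k(v) \quad \le \quad \frac{1}{k}\,\Bigl( \log\tfrac{c_2(x)}{c_1(x)} \ + \ \log\tfrac{c_2(v)}{c_1(v)}\Bigr),
$$
so one may take $C = \log\bigl(c_2(x)/c_1(x)\bigr) + \log\bigl(c_2(v)/c_1(v)\bigr)$, depending only on $\cA$ (through $\tilde f$), $x$, and $v$, as required.

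There is no genuinely hard step: the content is entirely packaged in Theorem~A (which guarantees $\tilde f$) and Theorem~\ref{th5} (which converts its existence into a linear rate). The single mildly delicate point is that Theorem~\ref{th5} is phrased for one functional $f$, whereas the upper and lower bounds here use \emph{different} functionals. This is circumvented by invoking Theorem~\ref{th5} twice, once for each functional, and using the invariant functional $\tilde f$ as the common reference through which both sides are tethered to the same number $\lambda$. The compactness/continuity justification of the comparison constants $c_i(x), c_i(v)$ is the only routine-but-essential calculation; the rest is bookkeeping.
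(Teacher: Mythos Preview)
Your proposal is correct and follows essentially the same approach as the paper: the paper's proof of Theorem~\ref{th10} consists of nothing more than invoking Theorem~A to obtain an invariant functional and then applying Theorem~\ref{th5} to the two functionals $r_x(\cdot)$ and $(v,\cdot)$. Your observation that Theorem~\ref{th5} must be applied twice (once for each functional, with the invariant $\tilde f$ serving as the common bridge to $\lambda$) is exactly the detail one needs to fill in, and you have handled it correctly.
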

\begin{remark}\label{r7}{\em The constant $C$ can be effectively estimated by entries of the matrices~$A_j$,
see~\cite{P3}.
}
\end{remark}
Since $\beta_k \ge \beta_k (v)$ and $\alpha_k \le \alpha_k(x)$, we see that the estimates
$\beta_k$ and $\alpha_k$ tend to $\lambda$ as well, and $\, \alpha_k  \, - \, \beta_k \, \le \, C\, k^{-1}$.
In general, there is no need to evaluate
the optimal values in problems~(\ref{alpha}) and~(\ref{beta}) with a good precision. To approximate~$\lambda$ it suffices
to find points $x$ and $v$ for which the difference $\alpha_k(x) \, - \, \beta_k(v)$ is small, say, less than $\varepsilon$,
then the Lyapunov exponent $\lambda$ is found with the precision~$\varepsilon$. As we shall see in numerical examples, in practice
the value  $\, \alpha_k  -  \beta_k\, $ decays much faster than $k^{-1}$, and it is enough to take a reasonably
small $k$ (much smaller than $1/\varepsilon$) to compute the Lyapunov exponent $\lambda$ with the precision~$\varepsilon$.
\begin{remark}\label{r10}
{\em Estimates $\alpha_k$ and $\beta_k$ can be extended to any set of matrices sharing a polyhedral invariant cone~$K$.
If the cone $K$ is spanned by vectors $\{h_j\}_{j=1}^{N_1}$ and its dual cone $K^*$
is spanned by vectors $\{g_i\}_{i=1}^{N_2}$, then writing formula for $\alpha(x)$ we replace $\, \frac{(Ax)_i}{x_i}\, $
by $\, \frac{(g_i, Ax)}{(g_i, x)}\, $, and writing formula for $\beta(v)$ we replace $\, \frac{(v, Ae_j)}{v_j}\, $
by $\, \frac{(v, Ah_j)}{(v, h_j)}\, $. Here it is important that both sets of vectors are finite, i.e., that the cone~$K$ is polyhedral.
As we shall see in Theorem~\ref{th30}, for families of matrices with a non-polyhedral invariant cone
 an effective lower bound for~$\lambda$ may not exist at all. In particular, $\beta_k$
is not such a bound any more.
}
\end{remark}

\bigskip

\subsection{Omitting condition (a)}

The lower and upper bounds $\beta_k (v)$ and $\alpha_k(x)$ respectively can be computed for every family of nonnegative matrices.
However, to prove the convergence of these bounds to $\lambda$ we essentially used conditions~\textbf{(a)} and~\textbf{(b)}, because
Theorem~A may fail without them. Moreover, there are simple examples showing that if at least one of the
conditions~\textbf{(a)} or \textbf{(b)} is not fulfilled, then the difference $\alpha_k - \beta_k$ may not vanish as $k \to \infty$,
in which case  our bounds do not provide the Lyapunov exponent computation with a given prescribed accuracy. For every family~$\cA$ condition~\textbf{(b)} can be, of course, checked immediately. Condition~\textbf{(a)} looks
more difficult to verify. Nevertheless is can be checked efficiently as well. The corresponding  algorithm takes~$2md^3$ arithmetic operations, where $d$ is the dimension, and $m$ is the number of matrices~\cite{PV}.
Thus, if both conditions~\textbf{(a)} and \textbf{(b)} are satisfied, then we apply Theorem~\ref{th10} to compute the Lyapunov exponent~$\lambda$. Otherwise, if at least one of them
 fails, one can still use inequality~(\ref{k}), but now there is no guarantee that both parts converge
 to $\lambda$ as $k \to \infty$. For some families this still gives good numerical estimates for $\lambda$
 (as for the binomial matrices from Subsection~VI.1 below). However, there are examples, when
 $\alpha_k$ and $\beta_k$ are too far from each other for all~$k$, and these bounds become useless.
A question arises: is it possible
to obtain effective upper and lower bounds for the Lyapunov exponent (perhaps, different from $\alpha_k$ and $\beta_k$)
without those two conditions?
We do not know the answer for condition~\textbf{(b)}. Is it true that if nonnegative matrices are allowed to have zero rows and columns, then the Lyapunov exponent can be sandwiched between two efficiently computable bounds, whose difference tend to zero?

As for condition~\textbf{(a)}, the answer is affirmative. That condition can be omitted,  with a special modification of the upper bound~$\alpha_k(x)$.
In this subsection we extend our approach to this case, when matrices of the family~$\cA$ do not necessarily have a positive product. To begin with,
we need the following key result proved in~\cite{PV}:
\smallskip

\noindent \textbf{Theorem B} \cite{PV}. {\em If a family of nonnegative matrices~$\cA$ satisfies condition~\textbf{(b)}, but do not have a positive product, then one of the two following cases takes place:

(1) $\cA$ is  reducible;

 (2) there is  a partition of the set $\Omega = \{1, \ldots , d\}$ to $r\ge 2$ sets $\Omega_1, \ldots , \Omega_r $, on which every
 $\qquad $ matrix from~$\cA$ acts as a permutation.

\noindent In the latter case
there exists a product $D$ of matrices from~$\cA$ that has a block-diagonal form:   $r$ strictly positive blocks corresponding to the sets $\Omega_1, \ldots \Omega_r$.}
\smallskip

 Property (1) means that there is a nontrivial subspace of $\re^d$ spanned by several basis vectors, that is invariant for all matrices from $\cA$. Property~(2) means that for every matrix $A \in \cA$ there is a permutation
$\sigma$ of the set $\{1, \ldots , r\}$ such that $AL_k \subset L_{\sigma(k)}, \, k = 1, \ldots , r$, where $\, L_k\, = \, \bigl\{ \, \sum_{i \in \Omega_k}\, t_i e_i \ \bigl| \  t_i \ge 0\, , \, i \in \Omega_k\, \bigr\}\, $ is a
 cone spanned by the vectors $\{e_i \ | \ i \in \Omega_k\}$.
To compute the Lyapunov exponent for a family not satisfying
condition~\textbf{(a)} one needs to consider both cases of Theorem~B.
\medskip

\textbf{Case 1. The family $\mathbf{\cA}$ is  reducible}. In this case there is a permutation of basis vectors, after which
all matrices from $\cA$ take a block upper-triangular form. This permutation can be found by a combinatorial  algorithm
 that takes $O(d^2)$ arithmetic operations (see, for instance,~\cite[Lemma 3.1]{J} and references therein). Now it remains to refer to the main result of the work~\cite{GMO}:
for a family of block upper-triangular matrices the Lyapunov exponent equals to the largest Lyapunov exponent of the blocks.
Hence, in case (1) the problem of computing the Lyapunov exponent is reduced to several analogous problems in smaller dimensions.
\medskip

\textbf{Case 2. There is  a partition of the set $\mathbf{\Omega}$, on which every
matrix from~${\mathbf \cA}$ acts as a permutation. } We first show that in this case, we can restrict our attention to the set $L = \cup_{i=1}^{\, r}\, L_i$.
This is a union of faces~$L_i$ of the positive orthant~$\re^d_+$,
corresponding to the sets $\Omega_i$ of the partition. Clearly, $A_jL \subset L$ for each $j = 1, \ldots , m$.
Consider an arbitrary functional~$f$ on the set~$L$, which is positive ($f(x)>0\, , \, x \in L, x\ne 0$)
and homogeneous ($f(tx) = tf(x), \, x \in L, \, t \ge 0$). For this functional we define $F_{\min}, F_{\max}, F_{\min}^{(k)},
F_{\min}^{(k)}$ in the same way as in Section~II. Then we establish the following  analogue  of Lemma~\ref{l10}:
\begin{lemma}\label{l20}
If a family $\cA$ is irreducible, then for an arbitrary functional~$f$ on $L$ and for each $n$
we have $\ F_{\min}^{(n)}\, \le \, \lambda \, \le \,
F_{\max}^{(n)}$. In particular, $\ F_{\min}\, \le \, \lambda \, \le \,
F_{\max}$.
\end{lemma}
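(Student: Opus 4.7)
I would proceed by paralleling the proof of Lemma~\ref{l10}, making the adjustments needed because $f$ lives on the union of faces $L = \bigcup_{i=1}^{r} L_i$ rather than on a proper cone with nonempty interior.

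\emph{Reduction and lower bound.} As in Lemma~\ref{l10}, it suffices to prove $F_{\min} \le \lambda \le F_{\max}$: applying this to $\cA^n$, which remains irreducible and in the Case~2 setting of Theorem~B, and using $\lambda(\cA^n) = n\lambda(\cA)$, then gives the indexed statement. The set $L \cap \{\|x\|=1\}$ is compact and avoids $0$, so by positivity and homogeneity there are $0 < C_1 \le C_2$ with $C_1\|x\| \le f(x) \le C_2\|x\|$ on $L$. Each $A_j$ permutes the $L_i$, hence $\cA^k$ preserves $L$ and $F_{\min}^{(k)}, F_{\max}^{(k)}$ are well-defined. The lower bound $F_{\min} \le \lambda$ then goes through verbatim from Lemma~\ref{l10}: $f(Bx) \le (C_2/C_1)\|B\|f(x)$ gives
\[
F_{\min}^{(k)} \ \le \ \tfrac{1}{k}\log(C_2/C_1) \ + \ \tfrac{1}{k}\E\log\|B\|,
\]
the analogue of Corollary~\ref{c10} on $L$ yields $F_{\min} \le F_{\min}^{(k)}$, and sending $k\to\infty$ together with Furstenberg--Kesten gives $F_{\min} \le \lambda$.

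\emph{Upper bound, the main obstacle.} The argument for $\lambda \le F_{\max}$ in Lemma~\ref{l10} used $x \in \mathrm{int}\,K$ together with $\|Bx\| \ge C(x)\|B\|$, but $L$ has empty interior in $\re^d$ so a substitute is required. I would fix any $i_0$, let $x \in L_{i_0}$ be strictly positive on $\Omega_{i_0}$, and aim at $\tfrac{1}{k}\E\log\|Bx\| \to \lambda$. Granted this, $f(Bx) \ge C_1\|Bx\|$ gives $F_{\max}^{(k)} \ge \tfrac{1}{k}\E\log(f(Bx)/f(x)) \ge \lambda - o(1)$, and $F_{\max} \ge F_{\max}^{(k)}$ closes the bound. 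Two ingredients drive the growth-rate claim. First, by Theorem~B there is $D \in \cA^{k_0}$ block-diagonal with strictly positive blocks, so $D$ acts on each $L_j$ by a positive matrix. Second, by irreducibility the permutations $\{\sigma_A : A \in \cA\}$ generate a group acting transitively on $\{1,\dots,r\}$, so for every $j$ there exists a product $T_j$ (post-composed with $D$ if necessary) that sends $x$ to a vector $T_jx$ strictly positive on $\Omega_j$. For any $B \in \cA^k$, the nonzero entries of $B$ live in $r$ blocks $B^{(j)}$ of sizes $|\Omega_{\sigma_B(j)}|\times|\Omega_j|$; hence $\|B\| \le r \max_j \|B^{(j)}\|$ by norm equivalence, whereas $\|B T_jx\| \ge c(x)\|B^{(j)}\|$ uniformly in $B$. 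Averaging over products of length $k + \max_j |T_j|$ and using that $\sigma_B$ visits every block with asymptotically positive frequency then yields $\tfrac{1}{k}\E\log\|Bx\| = \lambda + o(1)$. Equivalently, one invokes a multiplicative ergodic theorem for irreducible nonnegative matrices: $\tfrac{1}{k}\log\|X_k x\| \to \lambda$ almost surely for every $x \in L \setminus \{0\}$, and passes to expectation by uniform integrability.

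\emph{Conclusion.} Combining the two bounds gives $F_{\min} \le \lambda \le F_{\max}$, and the reduction to $\cA^n$ extends this to $F_{\min}^{(n)} \le \lambda \le F_{\max}^{(n)}$. The crux is the growth estimate $\tfrac{1}{k}\E\log\|Bx\| \to \lambda$ for $x \in L$; both Theorem~B's block-positive product $D$ and the transitivity of the induced permutation action are essential, and this is exactly where irreducibility is used. Without it, a proper union of the faces $L_i$ could be $\cA$-invariant and trap the orbit of $x$, making $\tfrac{1}{k}\E\log\|Bx\|$ fall short of $\lambda$.
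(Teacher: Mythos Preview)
Your overall architecture matches the paper's proof exactly: reduce to the case $n=1$, carry the lower bound over verbatim from Lemma~\ref{l10} via the norm comparison $C_1\|x\|\le f(x)\le C_2\|x\|$ on the compact set $L\cap\{\|x\|=1\}$, and for the upper bound isolate the single new ingredient, namely that some (in fact every) nonzero $x\in L$ satisfies $\frac{1}{k}\,\E\log\|Bx\|\to\lambda$. The paper handles that ingredient by a direct citation of Hong's result~\cite{Hong}: under irreducibility and condition~\textbf{(b)}, every nonzero $x\in\re^d_+$ has this property, so any $x\in L\setminus\{0\}$ works. Your ``equivalently, one invokes a multiplicative ergodic theorem for irreducible nonnegative matrices'' is precisely this step.

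Where your write-up diverges is the attempted self-contained argument preceding that citation. The inequalities you set up are correct---with $T_jx$ strictly positive on $\Omega_j$ one does get $\|BT_jx\|\ge c_j\|B^{(j)}\|$ uniformly in $B$---but the passage ``averaging over products of length $k+\max_j|T_j|$ and using that $\sigma_B$ visits every block with asymptotically positive frequency then yields $\frac{1}{k}\E\log\|Bx\|=\lambda+o(1)$'' is not a proof. What you have established controls $\E\log\|B\,T_jx\|$ from below by $\E\log\|B^{(j)}\|$ for each $j$ separately; it does not give a lower bound on $\E\log\|Bx\|$ for the \emph{fixed} starting vector $x\in L_{i_0}$, because $\|Bx\|$ sees only the block $B^{(i_0)}$, whereas $\|B\|$ is governed by whichever block is largest. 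Bridging that gap genuinely requires the ergodic input you defer to in the next sentence, so the direct sketch is not an independent route but rather motivation for why such a theorem is needed. If you keep it, flag it as heuristic and rely on the cited result for the actual proof, as the paper does.
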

\begin{proof} The proof is literally the same as the proof of Lemma~\ref{l10} 
with only one exception: in order to prove
 that $F_{\max} \ge \lambda$ we need to show that there exists a vector $x \in L$, for which
\begin{equation}\label{need}
\frac1k\
\E \, \Bigl\{\  \log \ \bigl\|\, B\, x\, \bigr\|\quad  \Bigl| \quad B \, \in \, \cA^k\,
\Bigr\}\ \to \ \lambda\qquad \ \mbox{as} \ k \ \to \ \infty\, .
\end{equation}
 In the proof of Lemma~\ref{l10}
we established the existence of such a point $x$ in the cone $K$, now we need this point in the set~$L$.
To this end we apply the main result of the work~\cite{Hong}: if a family~$\cA$ is irreducible and
its matrices have no zero columns and rows, then every nonzero vector $x \in \re^d_+$ satisfies~(\ref{need}).
Thus, an arbitrary   $x \in L\, , \, x \ne 0$ suffices.
 The remainder of the proof is the same as for Lemma~\ref{l10}.\end{proof}
\medskip

From~\cite[theorem~4]{P3} it follows that for a family of matrices
satisfying assumptions of the case~(2) of Theorem~B there exists an invariant functional~$\tilde f$
on $L$, for which $F_{\min} = F_{\max} = \lambda$. Now, precisely as in the proof of Theorem~\ref{th5},
we conclude that for every functional $f$ on $L$ one has
\begin{equation}\label{k1}
 F_{\max}^{(k)} \ - \ F_{\min}^{(k)}\quad \le \quad C\, k^{-1}\, , \qquad k \in \n\, ,
\end{equation}
where the constant~$C$ depends only on $\cA$ and on $f$. To estimate the Lyapunov exponent 
it remains only to choose any convenient functional~$f$ in order to compute
 the values $F_{\min}^{(k)}$ and  $F_{\max}^{(k)}$.
\smallskip

For $F_{\min}$ we again choose $f(x) = (v,x)$ with arbitrary $v > 0$.
It appears that for this functional we again have the equality $F_{\min} = \beta(v)$, where $\beta(v)$
is defined by~(\ref{betapos}). This is not obvious, because now we take minimum not over
the whole set $\re^d_+$, but over a much narrower set~$L$.  We have
$$
F_{\min}\ = \ \min_{n=1, \ldots , r}\quad \inf_{x \, \in \, L_n\, , \, (v,x)=1}\,  \E \
\left\{\ \log \ (v, Ax) \quad \Bigl| \quad A \in
\cA\, \right\}\, .
$$
Since the minimum of a concave function $\E \, \bigl\{\, \log \, (v, Ax) \ \bigl| \ A \in
\cA\, \bigr\}\,$ on the simplex~$\{x\, \in \, L_n\, , \, (v,x)=1\}$ is attained at an extreme point,
i.e., at a basis vector, we have
$$
F_{\min}\ = \ \min_{n=1, \ldots , r}\, \min_{j \, \in \, \Omega_n\, }\,  \E \,
\left\{\, \log \, \frac{(v, Ae_j)}{(v, e_j)} \ \Bigl| \ A \in
\cA\, \right\}\ = \ \min_{j = 1, \ldots , d}\ \E \,
\left\{\, \log \, \frac{(v, a^j)}{v_j} \ \Bigl| \ A \in
\cA\, \right\}\ = \  \beta(v). 
$$
\smallskip

For $F_{\max}$ we again take the functional $\, r_x(y) = \max_{i=1, \ldots,d} \frac{y_i}{x_i}\, $, where $x > 0$.
For every $n = 1, \ldots , r$ we have $\max\limits_{y \in L_n\, , \, r_x(y) = 1}\, r_x(Ay)\, = \,
\max\limits_{i \in \Omega_{\sigma(n)}}\frac{(Ax)_i}{x_i}$, where $\sigma$ is our permutation. Whence,
$$
F_{\max}\quad = \quad \max_{n = 1, \ldots , r}\ \E \
\Bigl\{ \ \log \ \max_{i \in \Omega_{\sigma(n)}}\frac{(Ax)_i}{x_i} \ \Bigl| \
A \in \cA \ \Bigr\}\, .
$$
This value will be denoted by $\tilde \alpha (x)$.  Interchanging $\max$ and $\log$ and writing
$j = \sigma(n)$, we obtain
\begin{equation}\label{talpha}
\tilde \alpha (x)\quad = \quad \max_{j = 1, \ldots , r}\ \E \
\Bigl\{ \ \max_{i \in \Omega_{j}} \ \log \ \frac{(Ax)_i}{x_i} \ \Bigl| \
A \in \cA \ \Bigr\}\, , 
\end{equation}
and
\begin{equation}\label{talphak}
\tilde \alpha_k (x)\quad = \quad \frac1k\ \max_{j = 1, \ldots , r}\ \E \
\Bigl\{ \ \max_{i \in \Omega_{j}} \ \log \ \frac{(Bx)_i}{x_i} \ \Bigl| \
B \in \cA^k \ \Bigr\}\, .
\end{equation}
Applying now (\ref{k1}) and~Lemma~\ref{l20} we obtain
\begin{theorem}\label{th20}
 For every family of nonnegative matrices possessing property (2) from Theorem~\ref{th20},
 and for every vectors $x,v > 0$ we have
 $$
 \beta_k(v) \ \le \ \lambda \ \le \ \tilde \alpha_k(x)\, , \qquad k \in \n \, ,
 $$
and
$$
\tilde \alpha_k(x)  \ - \ \beta_k(v) \quad \le \quad C\, k^{-1}\, , \qquad k \in \n\, ,
$$
where the constant $C$ depends on $\cA, x$ and $v$.\end{theorem}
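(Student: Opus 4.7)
The plan is to derive the theorem directly from Lemma~\ref{l20} by applying it to two distinguished functionals on $L=\cup_{i=1}^r L_i$, and then to obtain the $O(1/k)$ bound on the gap by invoking the uniform rate estimate~(\ref{k1}) that the paper has already established for every positive homogeneous functional on~$L$.

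For the lower bound I would apply Lemma~\ref{l20} to the linear functional $f_1(x)=(v,x)$. The computation immediately preceding the theorem already identifies $F_{\min}(f_1,\cA)=\beta(v)$ even with the infimum restricted to $L$: the minimum of the concave function $\E\{\log(v,Ax)\}$ over each simplex $\{x\in L_n:(v,x)=1\}$ is attained at a vertex $e_j$ with $j\in\Omega_n$, and the outer minimum over $n$ sweeps through every coordinate. Replacing $\cA$ by $\cA^k$ and dividing by $k$ gives $F_{\min}^{(k)}(f_1,\cA)=\beta_k(v)$, whence Lemma~\ref{l20} yields $\beta_k(v)\le\lambda$. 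For the upper bound I would use $f_2(y)=r_x(y)$. Since every $A\in\cA$ maps $L_n$ into $L_{\sigma(n)}$ by some permutation $\sigma$, one has $\sup_{y\in L_n,\,r_x(y)=1}r_x(Ay)=\max_{i\in\Omega_{\sigma(n)}}(Ax)_i/x_i$; taking expectation over $\cA^k$ and the outer maximum over $n$ (equivalently, over $j=\sigma(n)$) identifies $F_{\max}^{(k)}(f_2,\cA)$ with $\tilde\alpha_k(x)$ as defined in~(\ref{talphak}), and Lemma~\ref{l20} yields $\lambda\le\tilde\alpha_k(x)$.

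For the linear-in-$k$ gap I would invoke~(\ref{k1}) twice. Applied to $f_1$ it gives $F_{\max}^{(k)}(f_1)-F_{\min}^{(k)}(f_1)\le C_1/k$, and combined with the sandwich from Lemma~\ref{l20} this yields $\lambda-\beta_k(v)\le C_1/k$. Applied to $f_2$ it gives the analogous $\tilde\alpha_k(x)-\lambda\le C_2/k$. Summing, $\tilde\alpha_k(x)-\beta_k(v)\le C/k$ with $C=C_1+C_2$ depending only on $\cA$, $x$ and $v$.

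The principal subtlety hides inside the statement~(\ref{k1}): the paper derives it by sandwiching an arbitrary positive homogeneous $f$ on $L$ between positive multiples of the invariant functional $\tilde f$ on $L$ from~\cite[Theorem~4]{P3}, so one must verify that $\tilde f$ is bounded away from zero on the unit cross-sections of $f_1$ and $f_2$ in $L$. This is automatic because $\tilde f$, being an invariant functional, is strictly positive on $L\setminus\{0\}$, and each such cross-section is a finite union of compact subsets of $L\setminus\{0\}$; without this observation the rate estimate could well fail on the boundary of $\re^d_+$, where $L$ lies. Once this is in place the remainder of the argument is a mechanical transcription of the Theorem~\ref{th5} scheme.
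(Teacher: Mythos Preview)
Your proposal is correct and is exactly the argument the paper has in mind: the theorem is stated right after the identifications $F_{\min}^{(k)}(f_1)=\beta_k(v)$ and $F_{\max}^{(k)}(f_2)=\tilde\alpha_k(x)$ have been made, and the paper's one-line proof ``Applying now~(\ref{k1}) and Lemma~\ref{l20}'' unpacks precisely into your two applications of Lemma~\ref{l20} for the sandwich and your two applications of~(\ref{k1}) for the rate. Your remark on the compactness subtlety behind~(\ref{k1}) is accurate but belongs to the derivation of~(\ref{k1}) itself, which the paper treats as already done.
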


Let us note that the values $\beta_k = \sup_{v > 0}\beta_k(v)\, $ and
$\tilde \alpha_k = \inf_{x > 0}\tilde \alpha_k(x)\, $ can be efficiently found
by using convex programming, because the function $\beta_k(v)$ is quasiconcave in~$v$, and
$\tilde \alpha(x)$ is convex in $u$, where $x_i = e^{u_i}\, , \, i = 1, \ldots , d$.
The first assertion is proved, the proof of the second one is the same as for the function~$\alpha_k(x)$.
Thus, both bounds from Theorem~\ref{th20} can be optimized by parameters. In practice this significantly improves
the rate of convergence to $\lambda$.
\smallskip

We see that for every family $\cA$ of nonnegative matrices satisfying condition~\textbf{(b)}
there is an efficient method of computing the Lyapunov exponent. For families satisfying condition~\textbf{(a)}
the method is given by Theorem~\ref{th10}, for a reducible family (the case~(1) of Theorem~B) the problem
is equivalent to several problems of smaller dimensions, for a family of the case~(2) of Theorem~B
the method is given by Theorem~\ref{th20}. The optimal parameters $v$ and~$x$ in the bounds can be
effectively found by convex programming. This covers all families of nonnegative matrices without zero rows and
columns.
\begin{corollary}\label{c30}
For every family $\cA$ of nonnegative matrices that have neither zero columns nor zero rows,
and for every vector $v >0$ we have $\, \lambda \, \ge \, \beta_k(v)\, $ and
$\, \lambda \, - \, \beta_k(v) \, \le \, C\, k^{-1}\, , \, k \in \n$.
\end{corollary}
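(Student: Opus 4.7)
The first inequality is immediate: for any $v>0$ the functional $f(x)=(v,x)$ is positive and homogeneous on the cone $K=\re^d_+$, so Lemma~\ref{l10} applied to $f$ and the family~$\cA^k$ (using $\lambda(\cA^k)=k\lambda(\cA)$) yields $\beta_k(v)=F_{\min}^{(k)}(f,\cA)\le\lambda$. So I would immediately dispose of $\lambda\ge\beta_k(v)$ and concentrate on the convergence rate.

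For the rate $\lambda-\beta_k(v)\le C k^{-1}$, the plan is to induct on the dimension $d$, using the dichotomy of Theorem~B to split a generic family satisfying~\textbf{(b)} into three exhaustive cases. If $\cA$ additionally satisfies~\textbf{(a)}, Theorem~\ref{th10} delivers the bound in one step. If~\textbf{(a)} fails and $\cA$ is irreducible, Theorem~B forces case~(2) (the permutation-of-partition scenario), and Theorem~\ref{th20} gives exactly $\tilde\alpha_k(x)-\beta_k(v)\le C/k$ with $\beta_k(v)\le\lambda\le\tilde\alpha_k(x)$, yielding the claim. Finally, if $\cA$ is reducible, I would permute the basis so that every $A_j$ takes block upper-triangular form with diagonal blocks $A_j^{(1)},\ldots,A_j^{(s)}$, each of dimension strictly less than $d$ and each inheriting property~\textbf{(b)} from~$\cA$. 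The main result of~\cite{GMO} then gives $\lambda=\max_i\lambda_i$ where $\lambda_i$ is the Lyapunov exponent of the $i$-th diagonal block family, and the inductive hypothesis supplies $\lambda_i-\beta_k^{(i)}(v^{(i)})\le C_i/k$ for each block, where $v^{(i)}$ is the restriction of $v$ to the coordinates of~$\Omega_i$.

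The concluding step in the reducible case is to pass from the per-block bounds back to a single $C/k$-estimate on $\beta_k(v)$ for the full family via formula~(\ref{betapos}) applied to $\cA^k$: for any $j\in\Omega_l$ one has $(v,Be_j)\ge(v^{(l)},B^{(l,l)}e_j)$, so each term in the minimum over~$j$ is bounded below by its block analogue. Combined with a uniform comparison of $v_j$ to $v^{(l)}_j$, this will yield the desired rate with a constant $C$ depending only on $\cA$ and $v$. The main obstacle I foresee is precisely this reducible case: the minimum over \emph{all} $j$ in~(\ref{betapos}) can in principle be attained at a column belonging to a slow block with $\lambda_l<\lambda$, which threatens to pull $\beta_k(v)$ below the maximal block exponent. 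Overcoming this requires exploiting the off-diagonal couplings in $\cA^k$, whose entries inherit the growth rate of the dominant block and so force $(v,Be_j)$ to grow at rate $e^{k\lambda}$ even for $j$ in a slow block; making this quantitative and absorbing the resulting constants into $C$ is the technical core of the proof.
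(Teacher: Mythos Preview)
Your overall strategy---splitting into the three cases furnished by Theorem~B and handling the cases ``\textbf{(a)} holds'' and ``case~(2)'' via Theorems~\ref{th10} and~\ref{th20} respectively---is exactly the route the paper's discussion preceding the corollary takes. The paper gives no separate proof: the corollary is presented as a summary of that three-case discussion.

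However, there is a genuine gap in the reducible case, and your own diagnosis of it is sharper than your proposed remedy. The off-diagonal couplings you plan to invoke need not exist. Take the single-matrix family $\cA=\{A\}$ with $A=\mathrm{diag}(2,1)$ (or, if you prefer $m\ge 2$, two copies of it). This family satisfies~\textbf{(b)}, is reducible, and for every $v>0$ and every $k$ formula~(\ref{betapos}) applied to $\cA^k$ gives
\[
\beta_k(v)\;=\;\frac{1}{k}\,\min\Bigl(-\log v_1+\log(2^k v_1),\ -\log v_2+\log v_2\Bigr)\;=\;\frac{1}{k}\,\min(k\log 2,\,0)\;=\;0,
\]
while $\lambda=\log 2$. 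Hence $\lambda-\beta_k(v)=\log 2$ for all~$k$, and no constant $C$ rescues the bound. Your inequality $(v,Be_j)\ge (v^{(l)},B^{(l,l)}e_j)$ is correct but only delivers $\beta_k(v)\ge\min_l\beta_k^{(l)}(v^{(l)})\to\min_l\lambda_l$, not $\max_l\lambda_l=\lambda$; and when the off-diagonal blocks vanish there is nothing further to exploit.

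The paper's actual position, spelled out in the paragraph you are trying to turn into a proof, is that in the reducible case one does \emph{not} work with $\beta_k(v)$ for the full family at all: one first passes to the irreducible diagonal blocks and applies the lower bound there. Read literally, the corollary therefore requires an irreducibility hypothesis; read in context, it is a statement about $\beta_k$ applied to each irreducible block produced by the reduction, with $\lambda$ then recovered as the maximum over blocks via~\cite{GMO}. Your induction can be salvaged only in that second reading, and the ``technical core'' you anticipate in the last paragraph is not merely technical---it is impossible as stated.
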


\bigskip

\subsection{Is it possible to avoid condition~(b) ?}

According to Theorem~\ref{th5} for every family of nonnegative matrices, without any additional assumptions,
the upper bound $\alpha_k$ converges to the Lyapunov exponent $\lambda$ as $k \to \infty$. For the lower bound $\beta_k$
this is also true, provided the family~$\cA$ is irreducible and condition~\textbf{(b)} is fulfilled.
Moreover, the irreducibility assumption is not essential, because if the family is reducible, then the
problem of the Lyapunov exponent computation is equivalent to several problems of smaller dimension. In turn,
condition~\textbf{(b)} is, in general, unavoidable. Indeed, if one of the matrices have a zero column, then
the lower bound becomes trivial: $\beta_k = -\infty$ for all $k$.
In some cases (but not always!) this difficulty can be overcome by special tricks. Let us describe two of them:
\smallskip

1) {\em Considering the transposed family}. If all matrices of the family $\cA$ have no zero columns (but may have zero rows),
then the value $\beta_k$ is different from $-\infty$ for each $k$, and may converge to $\lambda$ as $k \to \infty$, although this convergence is not theoretically guaranteed. Hence, if the matrices of~$\cA$ have zero columns, then one can compute
$\beta_k$ for the family $\cA^* = \{A_1^*, \ldots , A_m^*\}$. In many cases this leads to good lower bounds.

\smallskip

2) {\em The modified estimate $\beta_k(v)$}. In some cases one can consider the following modified
value for a vector $v \in \re^d_+, v \ne 0$:
\begin{equation}\label{betapos-mod}
\tilde \beta_k(v)\quad = \quad \min_{j=1, \ldots , d\, , \, v_j > 0}\ \left( \ -\log \, v_j\  + \ \E \
\Bigl\{\ \log \ (v, b^j) \quad \Bigl| \quad B \in
\cA^k\, \Bigr\}\ \right)\, ,
\end{equation}
where $b^j$ is the $j$th column of the matrix~$B$.
In contrast to the value $\beta_k(v)$, here the vector $v$ may have zero components, and the minimum is taken
over its nonzero components. It is shown easily that for every $v$ this is a lower bound for $\lambda$.
If we choose $v$ so that it has zeros at all positions corresponding to zero columns of matrices
from $\cA^k$, then $\beta_k(v)$ may converge to $\lambda$. We illustrate this trick in Section~VI by a numerical example and apply it for estimating the Lyapunov exponent of large sparse matrices arising in one problem of the language theory.

Let us stress that for both these tricks there are corresponding counterexamples, when they do not work.
In general, if the matrices have zero columns/rows, we do not know any satisfactory lower bound for $\lambda$.
Finding such a bound can be considered as a challenging open problem.

\bigskip

\section{The Lyapunov exponent of general matrices}

In this section we present an efficient upper bound for the Lyapunov exponent of general matrices
(not necessarily nonnegative), and
argue that such a lower bound does not exist.

Let us first assume that all matrices $A_1, \ldots , A_m$ share a common invariant cone~$K$.
The Lyapunov exponent~$\lambda$ is defined as the limit of the value $\, \frac1k \, \E \bigl\{\, \log \, \sup\limits_{x \in K, \|x\| \le 1} \, \|Bx\|\ \bigl| \, B \in \cA^k\, \bigr\}\, $ as $\, k \to \infty$. For every $k$ this value is an upper bound
for~$\lambda$. More generally, for any positive homogeneous functional on $K$ the value
\begin{equation}\label{up1}
\frac1k \ \E \ \Bigl\{\ \log \, \sup_{x \in K, f(x) \le 1} \, f(Bx)\ \Bigl| \ B \in \cA^k\ \Bigr\}
\end{equation}
is an upper bound for $\lambda$, which by Theorem~\ref{th5} converges to~$\lambda$ as $k \to \infty$.
This upper bound is usually applied in the literature to estimate the Lyapunov exponent. The most popular
functional $f$ here is the Euclidean norm. On the other hand, the value
\begin{equation}\label{up2}
F_{\max}^{(k)}\quad = \quad \frac1k \  \, \sup_{x \in K, f(x) \le 1} \ \E \Bigl\{\ \log \, f(Bx)\ \Bigl| \ B \in \cA^k\, \Bigr\}
\end{equation}
is, at least, not bigger (in most cases, actually, much smaller) than~(\ref{up1}), because the maximum of means does not
exceed the mean of maxima. Therefore, the upper bound~$F_{\max}^{(k)}$ is closer to~$\lambda$ than~(\ref{up1}). However, its computation may face serious difficulties, because
it involves finding the maximal value of the function $\, \psi_k(x) = \, \frac 1k\,  \E \bigl\{\, \log \, f(Bx)\ \bigl| \ B \in \cA^k\, \bigr\}\, $ on the set $\, {x \in K, f(x) \le 1}$. Basically, such a maximization problem can be efficiently solved
only in the case, when the function $\psi_k(x)$ is concave, or quasiconcave. Unfortunately, it does not possess this property for most of norms~$f(x)$, including the Euclidean norm. That is why we use the linear functional $f(x) = (v, x)$, for which
the function $\, \psi_k(x) = \, \frac 1k\, \E \bigl\{\, \log \, (v, Bx)\ \bigl| \ B \in \cA^k\, \bigr\}\, $
is concave, and can be effectively maximized. Its maximum gives the upper bound $\gamma_k(v)$ defined in~(\ref{gamma1})
and in~(\ref{gamma2}).  As we have already mentioned, the shortcoming of this upper bound is that the function $\gamma_k(v)$
is not convex, and one is not able to efficiently minimize it over $v \in K^*$. Nevertheless, it is still possible to pick an arbitrary $v,$ hoping to obtain a good bound for~$\lambda.$

To extend this approach to general matrices, without the common invariant cone assumption,  we apply the so-called {\em semidefinite lifting}.
Let us have a family $\cA = \{A_1, \ldots , A_m\}$. To each matrix $A_j$ we associate an operator $\tilde A_j$
on the $\frac{d^2 + d}{2}\, $- dimensional space $\cM_d$ of symmetric $d\times d$-matrices defined as follows:
$$
\tilde A_j \, X \quad = \quad A_j \, X\, A_j^*\, , \qquad X \, \in\, \cM_d\, .
$$
We thus obtain the family $\tilde \cA = \{\tilde A_1, \ldots , \tilde A_m\}$. All operators $\tilde A_j$ now share
a common invariant cone: the cone $\cK_d$ of symmetric positive semidefinite  $d\times d$-matrices.
Let us recall that $\cK^* = \cK$ and that the scalar product in the space $\cM_d$ is defined as $(X, Y) = {\rm tr} (XY)$.
This is shown easily that $\lambda (\tilde \cA) \, = \, 2\, \lambda ( \cA)$. Therefore, we can apply the upper bound~$\gamma_k(v)$ to the family $\tilde \cA$ and get the following upper bound for $\lambda (\cA)$
\begin{equation}\label{Kr1}
\Gamma_k(V)\quad = \quad \frac{1}{2k}\quad \sup_{X \, \succeq\,  0, \ {\rm tr} \, (VX) = 1} \quad  \E \ \Bigl\{ \, \log\ {\rm tr} \, (VBXB^*)   \ \Bigl| \ B \, \in \, \cA^k\ \Bigr\}\, ,
\end{equation}
where $X \, \succeq\,  0$ means that the matrix $X$ is positive semidefinite.
For every positive definite matrix $V$ the value $\Gamma_k(V)$ is an upper bound for $\lambda(\cA)$, and it converges
to it as $k \to \infty$. To compute $\Gamma_k(V)$ one needs to find the maximum of a smooth concave function
on the intersection of the cone $\cK_d$ with a hyperplane $\{X \in \cM_d \ | \ {\rm tr} \, (VX) = 1 \}$.
This problem can be efficiently solved  by standard tools of semidefinite programming (SDP, see, for instance,~\cite{VB}).
In many cases the most natural choice is to take $V=I$ (the identity matrix), which yields the following upper bound:
\begin{equation}\label{Kr2}
\Gamma_k(I)\quad = \quad \frac{1}{2k}\quad \sup_{X \, \succeq\,  0, \ {\rm tr} \, (X) = 1} \quad  \E \ \Bigl\{ \, \log\ {\rm tr} \, (BXB^*)   \ \Bigl| \ B \, \in \, \cA^k\ \Bigr\}\, ,
\end{equation}
Let us show that this upper bound is better that the standard upper bound with the Euclidean norm.
\begin{proposition}\label{p20}
For every matrix family we have $\, \Gamma_k(I)\, \le \, \frac1k\, \E \, \bigl\{ \log \|B\|_2 \ \bigl| \ B \in \cA^k\, \bigr\}\, , \ k \in \n$.
\end{proposition}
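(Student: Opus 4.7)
The plan is to recognize that, once the supremum and expectation are swapped in the right direction, the inner supremum reduces to the squared spectral norm of $B$, and the inequality follows immediately.

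First I would observe the following pointwise identity: for any fixed matrix $B$,
$$
\sup_{X \, \succeq \, 0, \ {\rm tr}\, X \, = \, 1} \ {\rm tr}\, (BXB^*) \quad = \quad \sup_{X \, \succeq \, 0, \ {\rm tr}\, X \, = \, 1} \ {\rm tr}\, (B^*B\, X) \quad = \quad \lambda_{\max}(B^*B) \quad = \quad \|B\|_2^{\, 2}.
$$
The cyclic property of the trace justifies the first equality; the second is a standard fact about maximizing a linear functional on the spectrahedron $\{X \succeq 0, \ {\rm tr}\, X = 1\}$ (the maximum is attained at the rank-one matrix $vv^*$, where $v$ is a unit eigenvector of $B^*B$ corresponding to its largest eigenvalue).

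Next I would use the elementary ``mean of maxima dominates maximum of means'' inequality, which is the same tool that motivates the whole approach of this paper. Namely, for every admissible $X$,
$$
\E \ \Bigl\{\, \log\, {\rm tr}\, (BXB^*)\ \Bigl|\ B \in \cA^k\, \Bigr\} \quad \le \quad \E \ \Bigl\{\, \log \, \sup_{X \, \succeq \, 0, \ {\rm tr}\, X \, = \, 1}\, {\rm tr}\, (BXB^*)\ \Bigl|\ B \in \cA^k\, \Bigr\}\, ,
$$
and by the identity above the right-hand side equals $\E\, \{\, \log\, \|B\|_2^{\, 2}\ |\ B \in \cA^k\,\} \, = \, 2\, \E\, \{\, \log\, \|B\|_2\ |\ B \in \cA^k\,\}$. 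Since this upper bound is independent of~$X$, one may take the supremum over~$X$ on the left-hand side without affecting the inequality, which yields
$$
2k\, \Gamma_k(I) \quad \le \quad 2\, \E \ \Bigl\{\, \log \, \|B\|_2\ \Bigl|\ B \in \cA^k\, \Bigr\}\, .
$$
Dividing by~$2k$ gives the claim.

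I do not foresee any real obstacle: the argument is two lines once the identity $\sup_X {\rm tr}(BXB^*) = \|B\|_2^{\, 2}$ is in hand, and that identity is a classical variational characterization of the largest eigenvalue. The only thing to be careful about is the direction of the supremum/expectation exchange, which is precisely the mechanism already highlighted in the paper's introduction as the source of the improvement over the standard bound~(\ref{euc}).
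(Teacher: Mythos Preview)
Your proof is correct and follows essentially the same route as the paper: swap the supremum over $X$ with the expectation (mean of maxima dominates maximum of means), then identify the inner supremum $\sup_{X \succeq 0,\, {\rm tr}\, X = 1} {\rm tr}(BXB^*)$ as $\|B\|_2^2$. The only cosmetic difference is that the paper justifies this last identity by noting that the linear functional is maximized at an extreme point $X = YY^*$ (a rank-one matrix), whereas you invoke the cyclic trace property and the variational characterization of $\lambda_{\max}(B^*B)$; these are interchangeable.
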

\begin{proof} We have
\begin{equation}\label{evkl}
2k\, \Gamma_k(I) \quad \le \quad \E  \ \Bigl\{\log\ \sup_{X \, \succeq\,  0, \ {\rm tr} \, (X) = 1}   \  {\rm tr} \, (BXB^*)   \ \Bigl| \ B \, \in \, \cA^k\ \Bigr\}\, .
\end{equation}
The supremum of the linear function $g(X) = {\rm tr} \, (BXB^*)$ is attained at an extreme point of the set
 $X \, \succeq\,  0, \ {\rm tr} \, (X) = 1$, i.e., at a rank one matrix $X = YY^*$, where $Y \in \re^d$ is a vector.
 Since ${\rm tr}\, (YY^*)\, = \, \|Y\|_2^2\, $ and $\, {\rm tr}\, (BYY^*B^*)\, = \, \|BY\|_2^2$, we see that
  $\sup_{X \, \succeq\,  0, \ {\rm tr} \, (X) = 1}   \  {\rm tr} \, (BXB^*)\ = \
\sup_{Y\in \re^d, \ \|Y\| = 1}   \  \|BY\|_2^2 \ = \ \|B\|_2^2$, and whence the right hand side of inequality~(\ref{evkl})
equals to $\, 2\, \bigl\{ \log \|B\|_2 \ \bigl| \ B \in \cA^k\, \bigr\}$.\end{proof}
\medskip

In Section~VI we present numerical results with randomly generated matrices showing that the bound
$\Gamma_k(I)$ can indeed be much closer to $\lambda$ than the standard bound with the Euclidean norm.
\smallskip

Now let us explain why there is no good lower bound for general families of matrices, even if they are non-factorable
(do not have nontrivial common invariant subspaces in~$\re^d$) and share a common invariant cone (not a polyhedral cone, when we have the lower bound~$\beta_k(v)$, see Remark~\ref{r10}).
To avoid any trouble with the case $\lambda = -\infty$, we formulate the result for lower bounds of the {\em Lyapunov radius} $\, \rho \, = \, e^{\, \lambda}$.
\begin{theorem}\label{th30}
a) There is a non-factorable pair $\bar \cA = \{A_1, A_2\}$ of $\, 2\times 2$-matrices such that for every
function~$\varphi$ on the set of all non-factorable pairs of $2\times 2$-matrices, which is continuous
at the point $\bar \cA$ and $\varphi (\cA) \le \rho (\cA)$ at any point $\cA$, we have $\, \varphi (\bar \cA) \, \le \, \rho(\bar \cA) \, -  \, 1$.

b) There is a non-factorable pair $\bar \cA = \{A_1, A_2\}$ of $\, 3\times 3$-matrices sharing an invariant cone~$K$ such that for every
function $\varphi$ on the set of all non-factorable pairs of $3\times 3$-matrices sharing the invariant cone~$K$, which is continuous at the point $\bar \cA$ and $\varphi (\cA) \le \rho (\cA)$ at any point $\cA$, we have
$\, \varphi (\bar \cA) \, \le \, \rho(\bar \cA) \, - \, 1$.
\end{theorem}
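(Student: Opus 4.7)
The plan exploits the upper semicontinuity of the Lyapunov radius $\rho=e^{\lambda}$ together with a construction of a non-factorable pair $\bar\cA$ at which this upper semicontinuity is strict by at least~$1$. First I would establish upper semicontinuity: by Lemma~\ref{l5} applied to any norm $\|\cdot\|$ viewed as a positive homogeneous functional on $\re^d$, the sequence $f(k)=\E\{\log\|B\|\mid B\in\cA^k\}$ is subadditive in~$k$, so Fekete's lemma gives $\lambda(\cA)=\inf_k f(k)/k$. Each $f(k)/k$ is continuous in the entries of the matrices, being a finite $p$-weighted sum of values $\log\|A_{i_k}\cdots A_{i_1}\|$. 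The infimum of continuous functions is upper semicontinuous, so $\lambda$ and $\rho$ are upper semicontinuous.

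Given upper semicontinuity, the theorem reduces to exhibiting $\bar\cA$ (non-factorable; preserving $K$ in (b)) together with a sequence $\cA_n\to\bar\cA$ of non-factorable pairs (preserving $K$ in (b)) with $\liminf_n\rho(\cA_n)\le\rho(\bar\cA)-1$. Indeed, any $\varphi\le\rho$ continuous at $\bar\cA$ would then satisfy
\[
\varphi(\bar\cA)=\lim_n\varphi(\cA_n)\le\liminf_n\rho(\cA_n)\le\rho(\bar\cA)-1.
\]

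For (a), I would take $\bar\cA$ to be a pair of rank-one matrices $\bar A_j=R\,u_jv_j^T$ with $u_1,u_2$ non-parallel, $v_1,v_2$ non-parallel, and all inner products $v_i^Tu_j$ nonzero, which together ensure non-factorability. From the identity $A_{i_k}\cdots A_{i_1}=R^k\bigl(\prod_{j=1}^{k-1}v_{i_{j+1}}^Tu_{i_j}\bigr)u_{i_k}v_{i_1}^T$ one computes $\rho(\bar\cA)$ explicitly and sees that it is of order $R$. The perturbed sequence $\cA_n$ is produced by adding small full-rank corrections $\bar A_j+n^{-1}C_j$ chosen so that the perturbation creates a contracting direction orthogonal to the original rank-one image; this forces Fekete's infimum to be attained at larger and larger $k$ with values strictly below $\rho(\bar\cA)$, and by scaling $R$ the resulting drop can be made to exceed~$1$.

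For (b), I would apply the semidefinite lifting of Section~V to the pair from (a): the operators $\tilde A_j\,X=\bar A_j\,X\,\bar A_j^*$ act on the $3$-dimensional space $\cM_2$ of symmetric $2\times2$ matrices, preserve the cone $\cK_2$ of positive semidefinite matrices (this is the Lorentz cone in $\re^3$, non-polyhedral, as required by Remark~\ref{r10}), and satisfy $\lambda(\tilde\cA)=2\lambda(\cA)$; this lifts the discontinuity of (a) directly, with gap at least doubled. Non-factorability of $\tilde\cA$ on $\cM_2$ is obtained from non-factorability of $\cA$ on $\re^2$ by a direct dimension count on invariant subspaces. The main obstacle throughout is the \emph{quantitative} control of the gap: upper semicontinuity only yields a drop qualitatively, so verifying that the drop is at least~$1$ requires an explicit small-$k$ estimate of $f(k)(\cA_n)-f(k)(\bar\cA)$ along the chosen perturbation path, together with a careful choice of the parametrized family $\bar\cA(R)$ so that this estimate grows with~$R$.
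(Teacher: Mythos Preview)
Your reduction is sound: once you exhibit non-factorable pairs $\cA_n\to\bar\cA$ with $\liminf_n\rho(\cA_n)\le\rho(\bar\cA)-1$, the conclusion about any continuous lower bound $\varphi$ follows directly (upper semicontinuity of $\rho$ is correct but not actually needed for this step). Your use of the semidefinite lifting for part~(b) is also exactly what the paper does.

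The genuine gap is the construction for~(a). You take a pair of rank-one matrices $\bar A_j=Ru_jv_j^T$ with all $v_i^Tu_j\ne 0$ and perturb by $n^{-1}C_j$, claiming this ``creates a contracting direction'' that pushes $\rho(\cA_n)$ below $\rho(\bar\cA)$. But no mechanism is given for why $\rho$ should \emph{drop}: the rank-one matrices already have a kernel, and replacing it by a small eigenvalue of order $n^{-1}$ does not make long products smaller than they were for $\bar\cA$. With all inner products nonzero one computes $\lambda(\bar\cA)=\log R+\sum_{i,j}p_ip_j\log|v_i^Tu_j|$ explicitly, and at such a non-degenerate point one would expect $\rho$ to be \emph{continuous}, so that $\rho(\cA_n)\to\rho(\bar\cA)$ for every perturbation. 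You flag the quantitative difficulty yourself, but the issue is prior to that: you have not located a pair at which lower semicontinuity of $\rho$ actually fails. The paper resolves this with a different, explicit construction: take $B_1$ the rotation by $\pi/3$ and $B_2$ the orthogonal projection onto the $x$-axis; then $\rho(\{B_1,B_2\})\ge\tfrac12$ since every product has norm at least $2^{-k}$. Perturbing the rotation angle to $\tfrac{\pi}{2}\cdot\tfrac{2n+1}{3n+1}\to\tfrac{\pi}{3}$ gives $B_2B_{1,n}^{\,3n+1}B_2=0$ exactly, hence $\rho(\cB_n)=0$ for every~$n$. The drop is produced by an exact algebraic coincidence along the sequence, not by a perturbative estimate, and scaling by~$2$ gives the required gap~$\ge 1$.
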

\begin{proof} a) Consider a pair $\cB = \{B_1, B_2\}$, where $B_1$ is a rotation of the plane by the angle~$\frac{\pi}{3}$
about the origin, and $B_2$ is the orthogonal projection onto the $OX$ axis. It is easily shown that
the Euclidean norm of every product of length~$k$ of matrices $B_1, B_2$ is at least~$2^{-k}$.
Hence, $\rho(\cB) \ge 1/2$. Let now $B_{1, n}$ be a  rotation of the plane by the angle $\frac{\pi}{2}\cdot \frac{2n+1}{3n+1}$.
about the origin, and $\cB_n = \{B_{1, n}, B_2\} $. Clearly, $\cB_n \to \cB$ as $n \to \infty$.
On the other hand, $B_2B_{1, n}^{3n+1}B_2 = 0$, and hence $\rho(\cB_n) = 0$ for each $n \in \n$.
Therefore, if $\varphi (\cB_n) \le \rho (\cB_n) = 0$ for all~$n$, then by continuity $\varphi (\cB) \le 0$.
Thus, $\rho (\cB) - \varphi (\cB) \ge 1/2$. Now it remains to take $\bar \cA = \{2B_1, 2B_2\}$.

b) It suffices to take $\bar \cA = \{4\tilde B_1, 4 \tilde B_2\}$, where $B_1, B_2$ are the $2\times 2$-matrices from the
proof of part~(a),  $\tilde B_i$ is the semidefinite lifting of the matrix
$B_i$. Matrices of this pair is three-dimensional, and they share a positive definite cone $\cK_3$.\end{proof}
\medskip

Thus, there are pairs of matrices, whose Lyapunov exponent cannot be well-approximated by a continuous lower bound.
This means that there is no algorithm, whose output continuously depends on the data, which for an
arbitrary pair of matrices computes the Lyapunov exponent with a given precision. It should also be mentioned that, as it was shown by Blondel and Tsitsiklis (1997),
the problem of computing the Lyapunov exponent  for matrices with integer entries is algorithmically undecidable~\cite{BT}.
\begin{remark}\label{r30}
{\em The statement (b) of Theorem~\ref{th30} does not hold for $2\times 2$-matrices, because every cone in~$\re^2$
is polyhedral (see Remark~\ref{r10}). }
\end{remark}

\begin{remark}\label{r20}
{\em The phenomenon that the Lyapunov exponent has good upper bounds, but does not have lower ones
is explained by the fact that this value is an upper semicontinuous function on the families of matrices,
but not lower semicontinuous. In the proof we used a pair of matrices, where   the Lyapunov exponent
is not lower semicontinuous. }
\end{remark}
\bigskip

\section{Numerical examples}

In this section, we show the efficiency of our method on several examples. We first analyze matrices drawn from an application in combinatorics, for which the exact value is known.   Then, in Subsection~VI.2, we study an application in functional analysis.  We then provide estimates for the Lyapunov exponent of some matrices arising in the language theory (Subsection~VI.3).
Finally, in Subsection~VI.4, we analyze diverse kinds of randomly generated matrices. As mentioned above, all optimization problems that we need to solve for computing these estimates are convex unconstrained problems.  We apply
 standard tools of convex optimization, namely, the matlab function \emph{fminunc}, which uses a quasi-Newton procedure (the so-called \emph{BFGS}-scheme), together with a cubic line search procedure.
All computations took a few minutes on a standard desktop pc.

As a general observation, our upper bound generally converges faster than the Euclidean bound (that is, the bound obtained from~(\ref{euc}) with the Euclidean norm) towards the real value in the first steps of the algorithm.  This allows us to have a fair upper estimate without having to compute too large products of matrices.
On the other hand, for our lower bound, not only it also converges faster than the previously available lower estimates (see~\cite{Key}), but in addition, generally these other lower bounds do not allow to reach a satisfactory accuracy at all. Even though they converge asymptotically towards the exact value, they appear to be often relatively far from it for the largest values of $k$ that a classical computer can handle (say, $k = 14$ or so).
We graphically present our results in terms of the Lyapunov radius $\, \rho\, =\, {\rm exp}\, (\lambda),$ because in most applications, it is the ``physical'' quantity that one wants to compute.
\smallskip

\subsection{The binomial triangle and generalizations}

 Recently, the
question of the density of ones in the $n$th row of Pascal's triangle has been studied in
number theory. This question is equivalent to the number of odd coefficients in the polynomial $(1+x)^n.$  In \cite{FBS}, the
authors generalize the question to the study of odd coefficients for the polynomial $(1+\cdots+x^m)^n.$  They show that for any $m \in \mathbb N,$ there exists a set of matrices $\Sigma_m$ such that for large $n,$ with probability one, the proportion of odd coefficients is given by the Lyapunov radius of $\Sigma_m.$  As an example,
we represent hereunder $\Sigma_6:$
\begin{equation}
\Sigma_6= \left \{ \begin{pmatrix}
 1& 0& 1& 2& 0& 0\\
 0& 0& 0& 0& 0& 0\\
 0& 0& 0& 0& 1& 2\\
 0& 2& 1& 0& 1& 0\\
 0& 0& 0& 0& 0& 0\\
 0& 0& 0& 0& 0& 0\\\end{pmatrix},
\begin{pmatrix}
 0& 0& 0& 2& 1& 0\\
 1& 0& 0& 0& 0& 0\\
 1& 0& 0& 0& 0& 2\\
 0& 2& 1& 0& 0& 0\\
 0& 0& 1& 0& 0& 0\\
 0& 0& 0& 0& 1& 0\\\end{pmatrix} \right \}.
 \end{equation}

\begin{figure}
\centering
\begin{tabular}{cc}
\includegraphics[scale = 0.35]{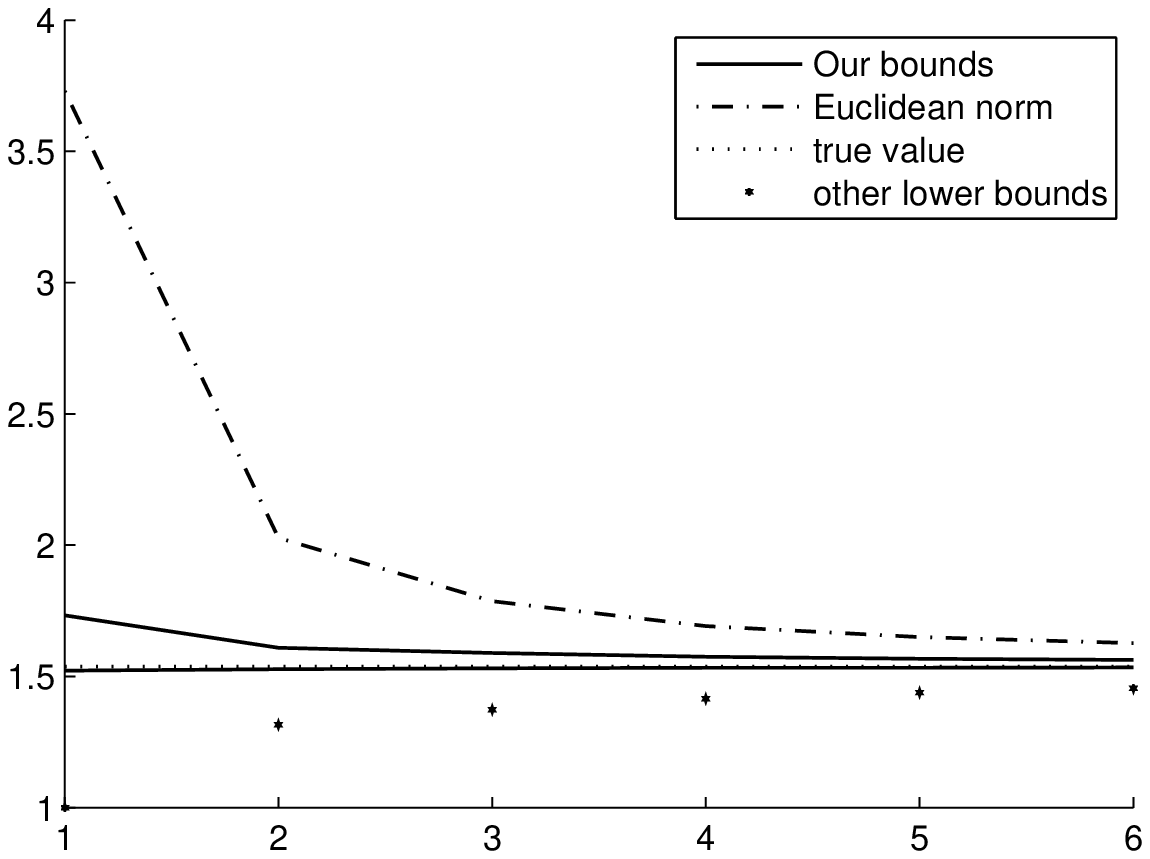}&
\includegraphics[scale = 0.3]{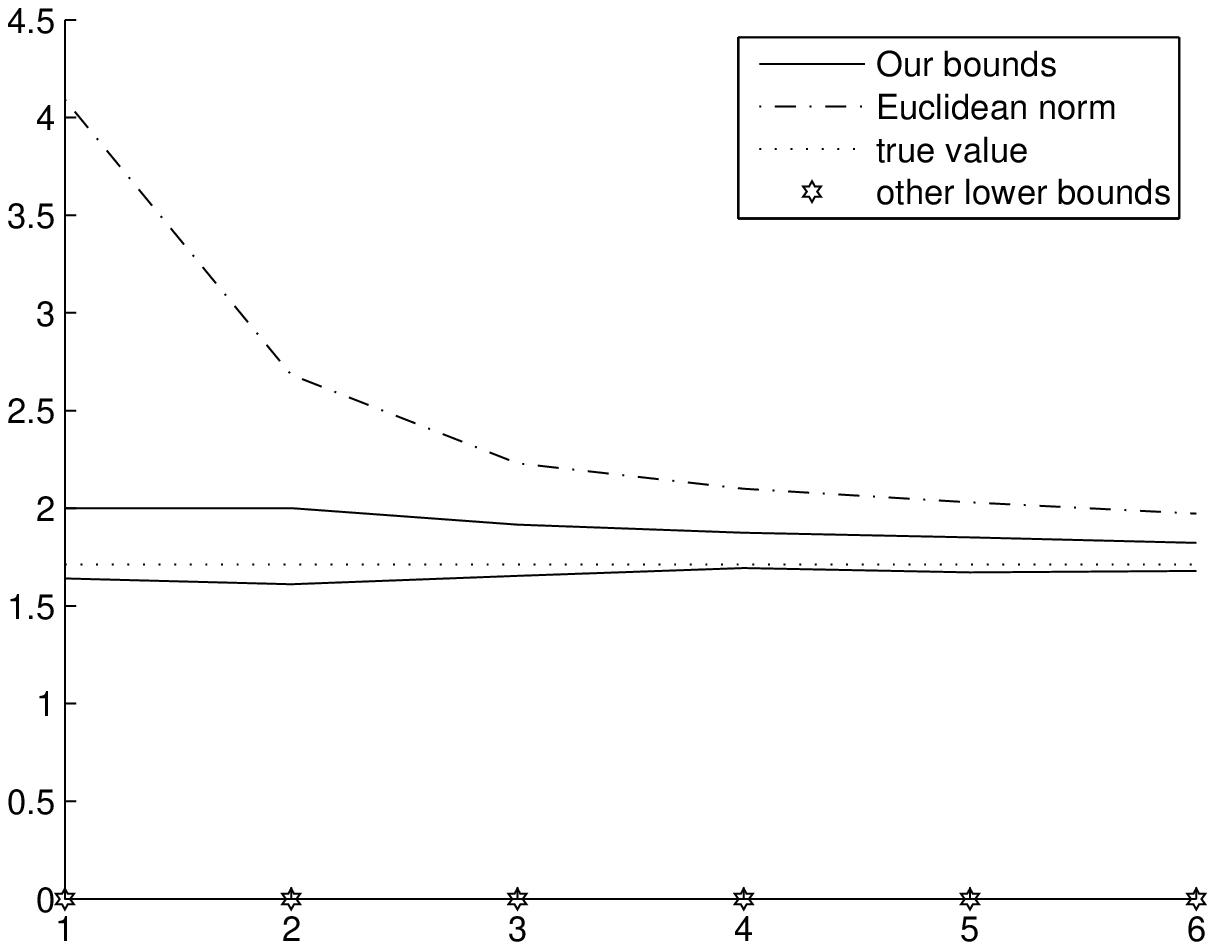} \\(a)&(b)
\end{tabular}
\caption{\footnotesize{Evolution of the different bounds for $\Sigma_2$ (a) and $\Sigma_6$ (b).}} \label{fig-finch}
\end{figure}
 Moreover, it is shown in \cite{FBS} that for these particular sets of matrices, it is possible to compute the Lyapunov exponent exactly.  For this reason, we start this section on numerical examples with this application, for which it is possible to refer to the exact solution.  Figure~\ref{fig-finch} shows how our estimates evolve in comparison to the exact value, and to the upper bound obtained from~(\ref{euc}) with an arbitrary norm. We chose the Euclidean norm, as it most often performs best in practice. Observe that the family $\Sigma_6$ does not satisfy condition~\textbf{(b)} (the first matrix has three zero rows), hence the convergence  of $\beta_k$ towards $\lambda$ is not guaranteed theoretically. Nevertheless, both $\beta_k$ and $\alpha_k$ converge rapidly towards the exact value of~$\lambda$.  Moreover, as one can see on figure \ref{fig-finch} (b), it may happen that the previously known lower bounds only give the trivial zero lower bound, while our lower bound rapidly converges towards the exact value.

\bigskip
\subsection{The regularity of de Rham curves}

\begin{figure}
\centering
\begin{tabular}{ccc}
\includegraphics[scale = 0.27]{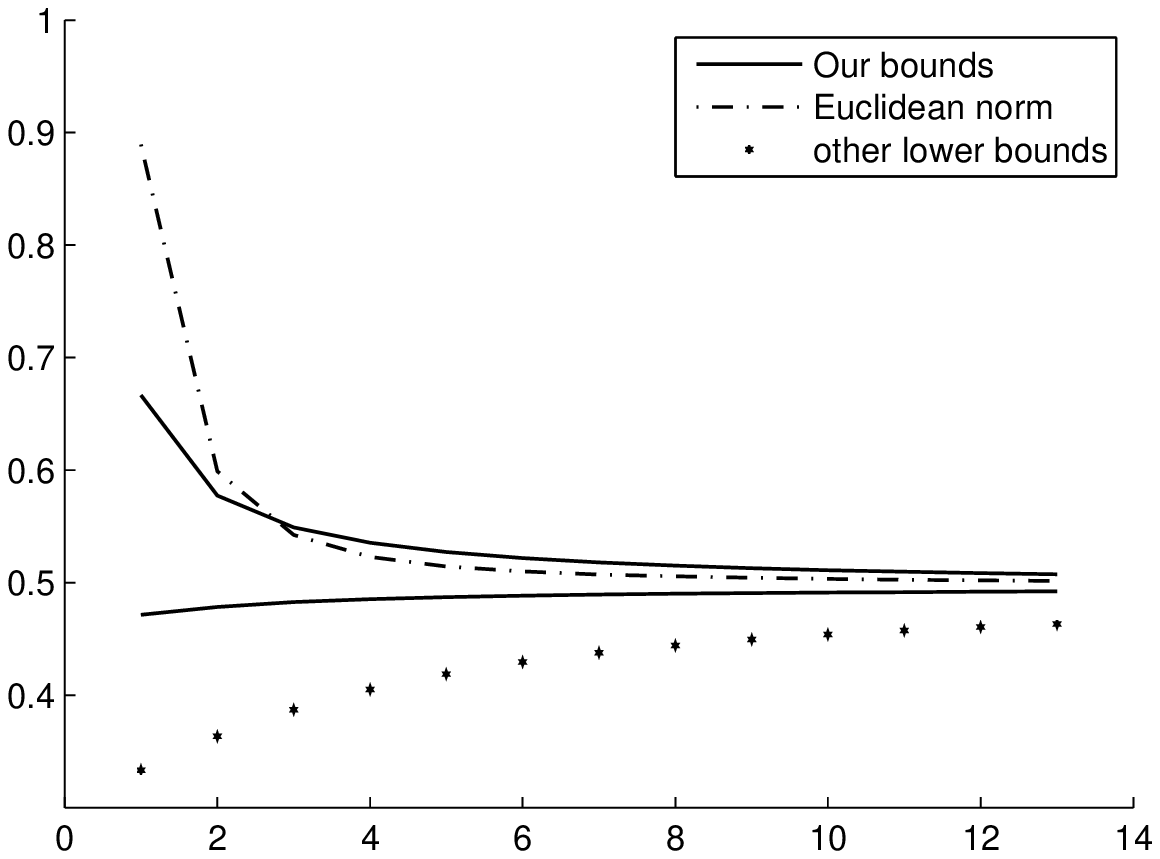}&
\includegraphics[scale = 0.27]{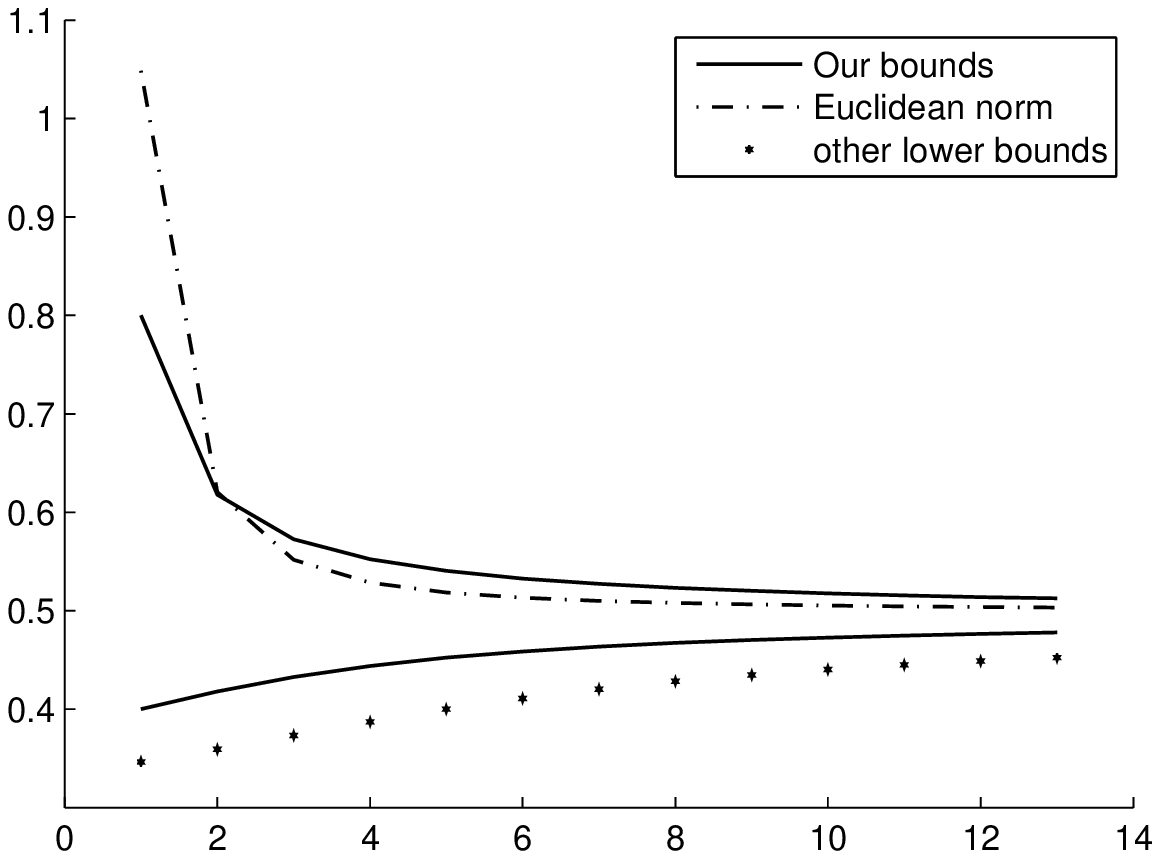}&
\includegraphics[scale = 0.27]{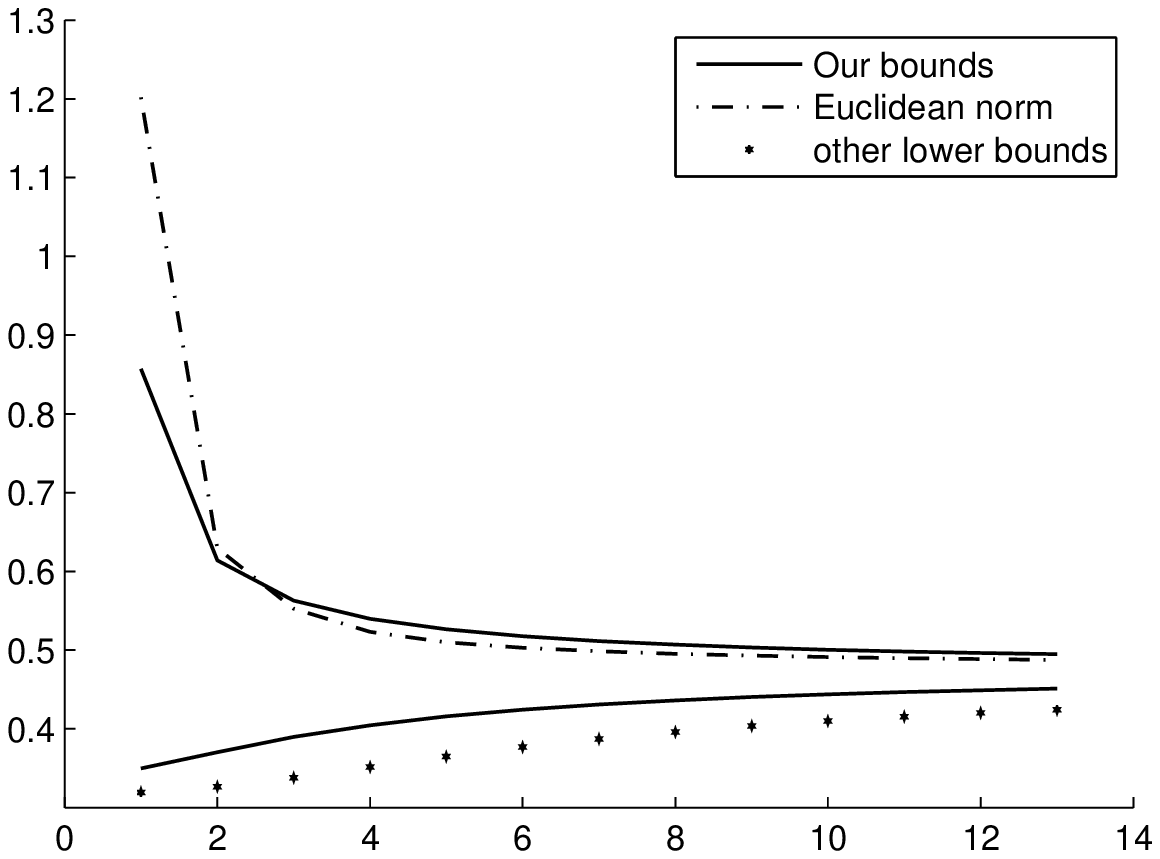} \\(a)&(b)&(c)
\end{tabular}
\caption{\footnotesize{Evolution of the different bounds for $\cA_{1/3}$ (a), $\cA_{1/5}$ (b), and $\cA_{1/7}$ (c).}} \label{fig-derham}\end{figure}

An important application of the joint spectral characteristics of matrices is the exponents of global
and local regularity of solutions of functional equations. In particular, they measure the regularity of fractal curves, refinable functions, and wavelets.  To a given  refinable function, one can associate a set of matrices so that the Lyapunov exponent of this set is equal to  the local regularity of the function at almost all points (in Lebesgue measure) of its domain.  We consider the simplest example of refinable functions, the so-called de Rham curves, which are obtained from an arbitrary flat polygon by successive cutting off its angles, when each side is divided by the same ratio
 $\, \omega: (1-2\omega): \omega$, where $\omega \in \bigl(0, \frac12 \bigr)$ is a given parameter.
The corresponding matrices of de Rham curve are given by
$$
\cA_{\, \omega} \ = \ \left\{ \left(
\begin{array}{cc}
\omega & 0\\
 \omega &  1-2\omega
\end{array}
\right)  , \ \left(
\begin{array}{cc}
1-2\omega & \omega \\
0 & \omega
\end{array}
\right) \right\}. $$
 For these (very small) matrices, our upper bound and the Euclidean bound both perform well when $k$ is large, but for small values of $k,$ our upper bound is much better.


\smallskip
\subsection{Words avoiding $7/3$-powers}

Our last application comes from language Theory.  It has been recently shown that the asymptotic growth of some specific languages can be approximated by joint spectral characteristics of matrices \cite{JPB,J}.
In \cite{BCJ}, this idea has been applied to the so-called language of $7/3$-free words (reads as  ``seven thirds-free words'').  These are words in which any subword is never repeated more than $7/3$ times in some sense.  See~\cite{BCJ} for more information.  The matrices corresponding to that language have size $227\times 227.$  In this reference, the authors analyze the joint and lower spectral radii of that set of matrices, but, in view of the large size of the matrices, nothing is said on the value of the Lyapunov exponent.
  The results are shown on Fig. \ref{fig-7thirds}.  From this figure it appears that the true value lies within the interval $[3.11,4.28].$ We applied our techniques in order to estimate this value.
\begin{figure}
\centering
\includegraphics[scale = 0.5]{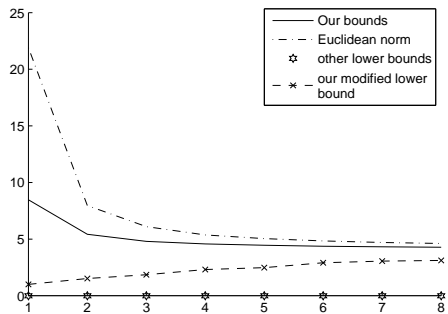}
\caption{\footnotesize{Evolution of the different bounds for the matrices corresponding to the language of $7/3$-free words.}} \label{fig-7thirds}\end{figure}
  The matrices in this application have zero rows and zero columns, therefore we use the modified lower bound $\tilde \beta_k$ defined by~(\ref{betapos-mod}).

\medskip

\subsection{Randomly generated matrices}

\begin{figure}\label{fig-random}
\centering
\begin{tabular}{cc}
\includegraphics[scale = 0.3]{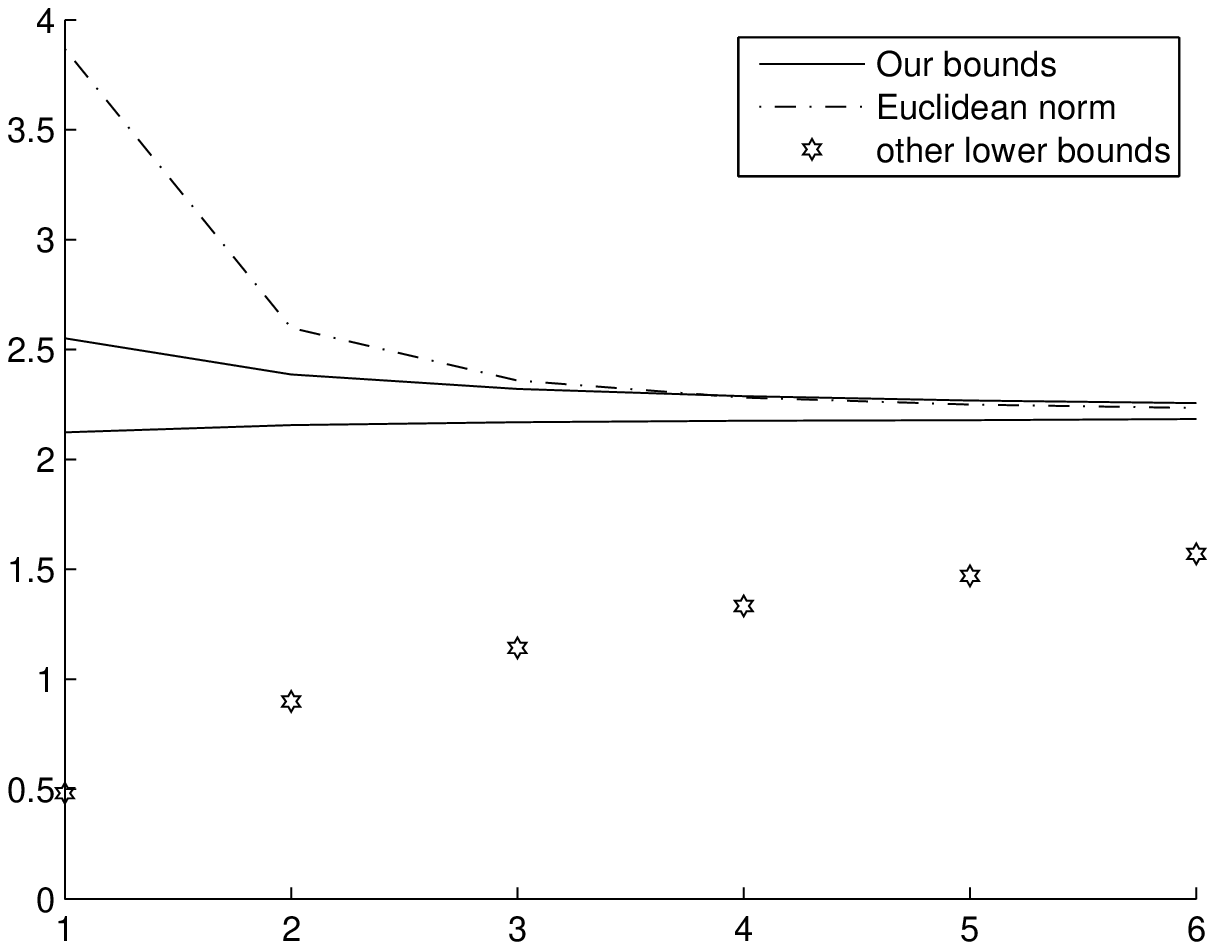}&
\includegraphics[scale = 0.3]{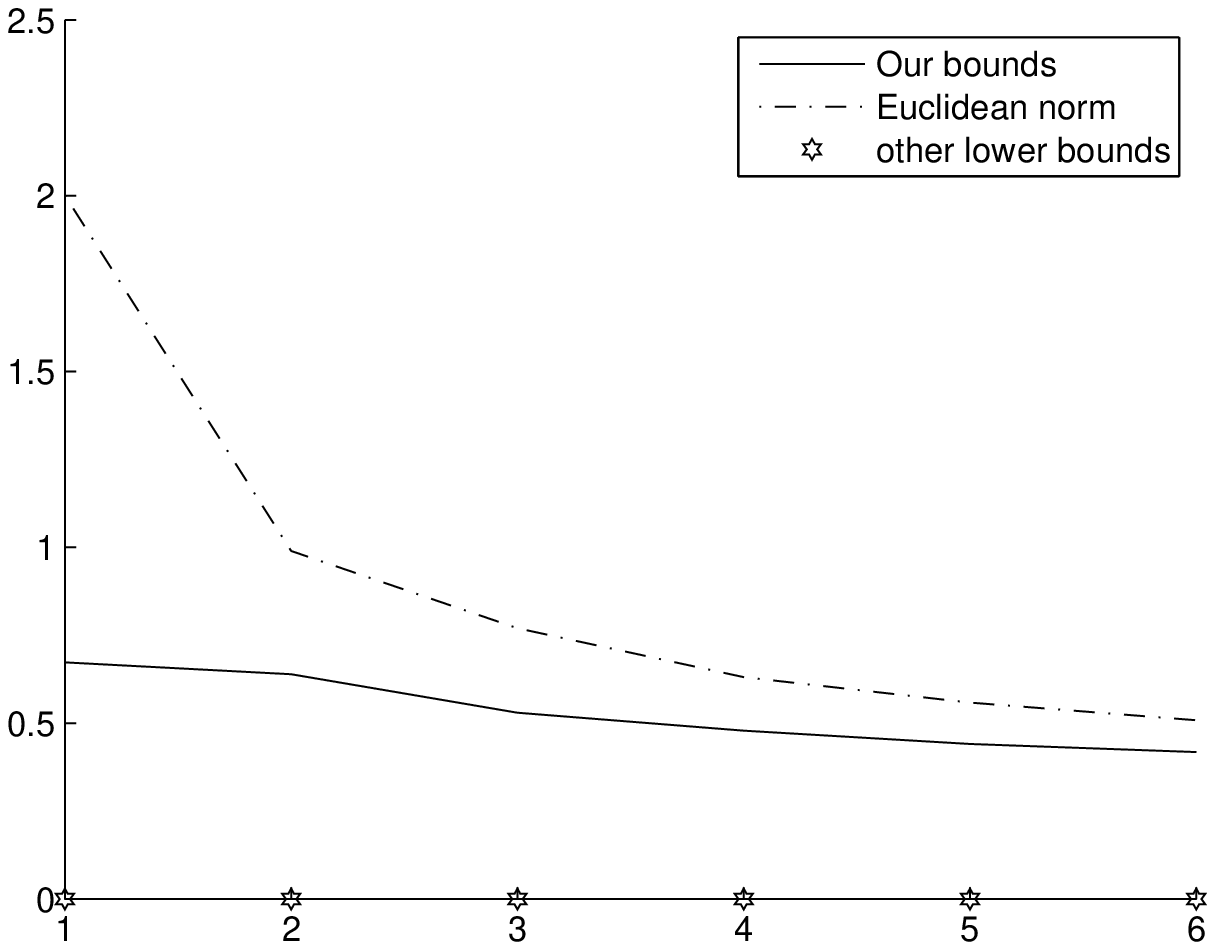} \\(a)&(b)\\ \\
\includegraphics[scale = 0.3]{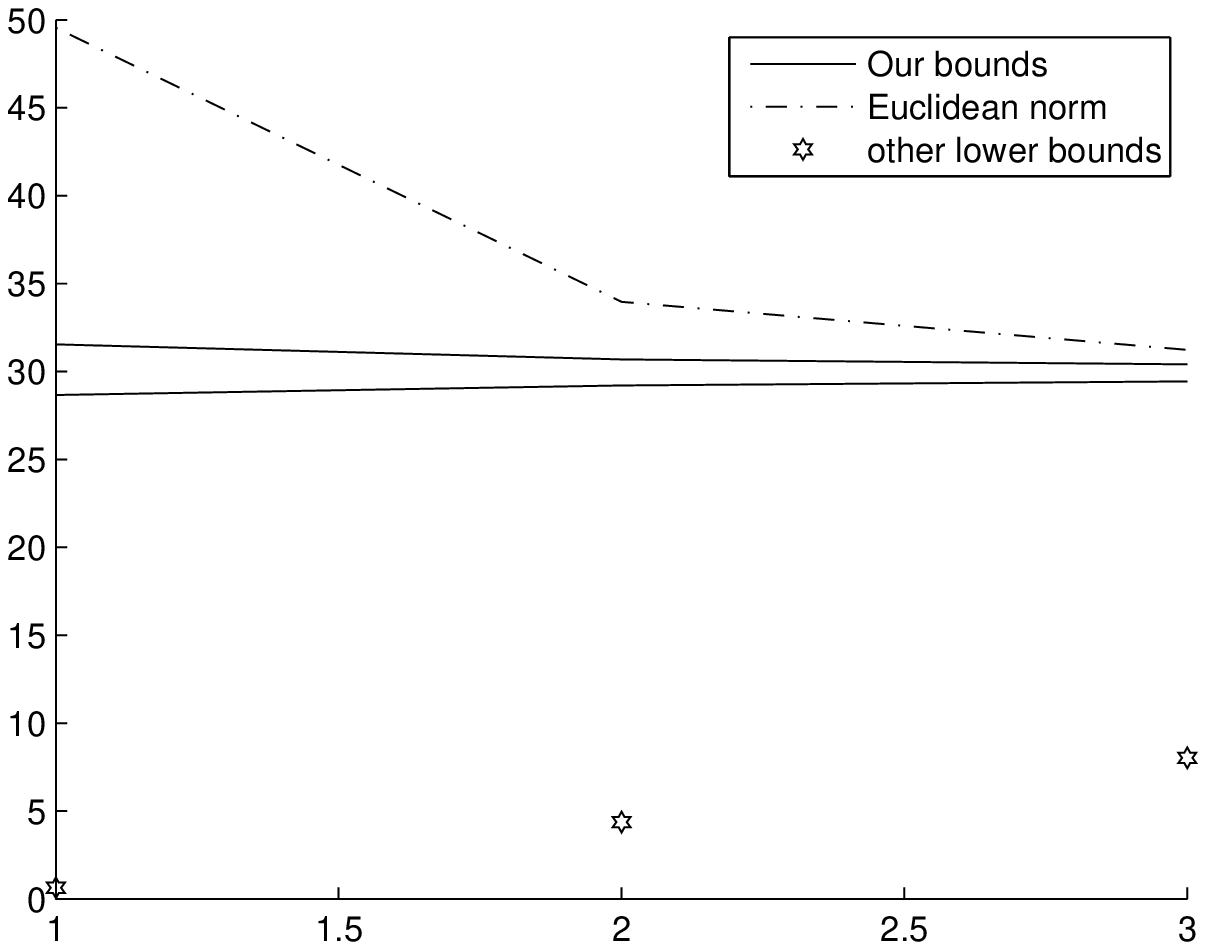}&
\includegraphics[scale = 0.3]{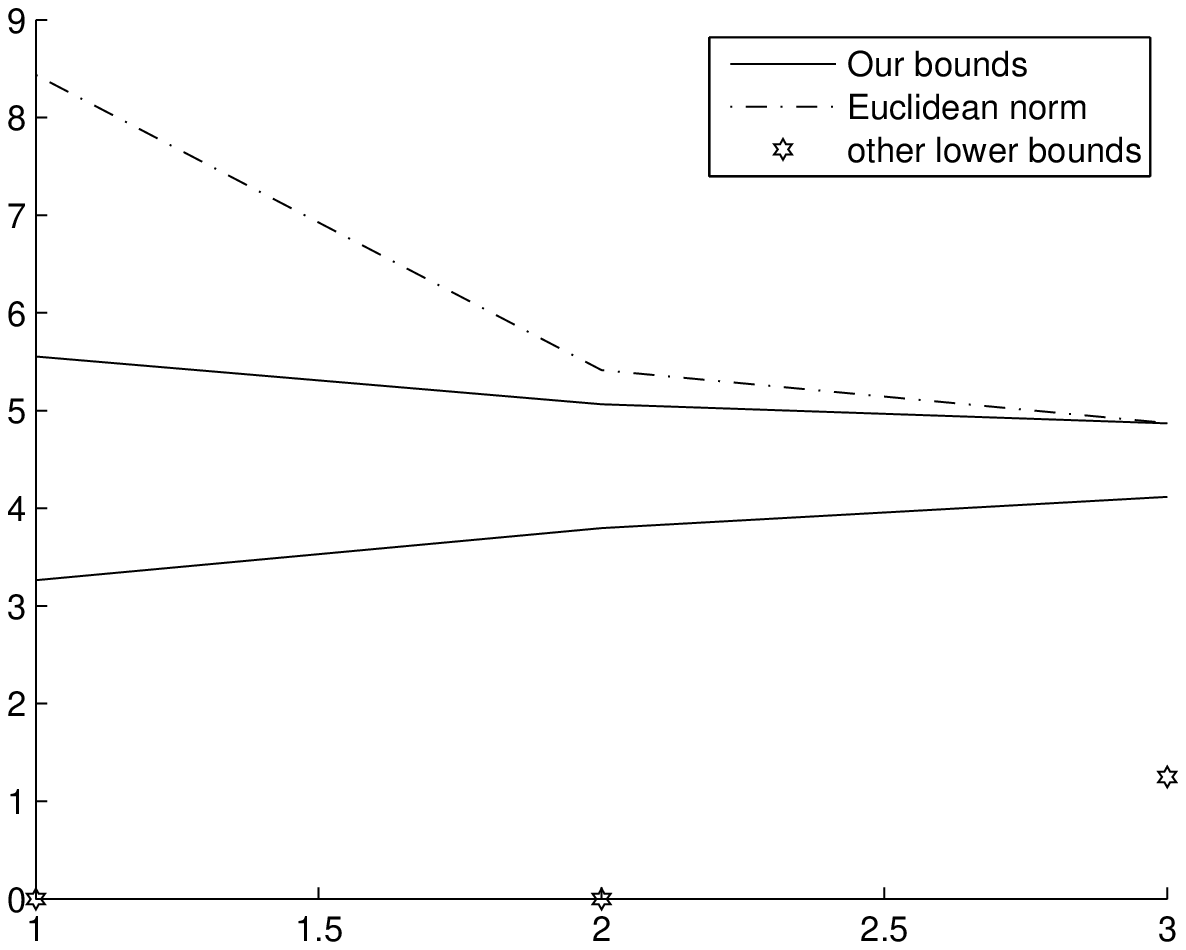}\\(c) & (d)
\end{tabular}
\caption{\footnotesize{Evolution (with the length of the computed products) of the different bounds for random matrices of dimension $5$ ((a) and (b)) and
$60$ ((c) and (d)). In ((b) and (d)), we sparsify the matrices by independently putting each entry to zero with a probability $5/7;$ the other entries are i.i.d homogeneously between zero and one.}}
\end{figure}

\subsubsection{Nonnegative matrices} We ran our algorithms on randomly selected nonnegative matrices.  We report here the results of computation on two different sizes: reasonably small matrices of dimension $5,$ and large matrices of dimension $60.$  For both these sizes, we generated dense and sparse matrices. For the dense matrices, the entries are drawn uniformly and independently between zero and one, while for the second case, we sparsify the matrices by putting each entries to zero with a probability $p=5/7.$  For matrices of size $60,$ computing long products rapidly becomes prohibitive.  As one can see on Fig. \ref{fig-random} (c) and (d), already with products of length~$1$ we have a fairly good approximation of the Lyapunov exponent, while the estimate with the Euclidean norm is still far from convergence.
As one can see in figure (b), if the matrices become so sparse that zero rows can appear, then our lower bound $\beta_k$
may vanish. In this case we apply the modified bound $\tilde \beta_k$. Table \ref{table-random} summarizes the scalability of our method
when the size of the matrices increases. For the same effort of computation, the accuracy seems to be somewhat independent of the dimension.
\begin{table}
\begin{center}
\begin{tabular}{|l|r|c|c|c|}
\hline
size  & $k$ & lower bound& upper bound & accuracy \\
\hline
10 &12 & 5.175 &5.205& 0.6\% \\
20&12 &10.16&10.22 &0.6\% \\
30&11 &15.28&15.35 & 0.4\%\\
40&11 &19.74&19.78  &0.2\% \\
50&11 &24.67 &24.83&0.7\%\\
\hline
\end{tabular}
\caption{\footnotesize{Results of our algorithm on random pairs of matrices. For all matrices, each entry
was drawn i.i.d. at random homogeneously between zero and one. In the second column, $k$ indicates the length of the
products computed in order to derive the bounds in columns $3$ and $4.$ }}
\label{table-random}
\end{center}
\end{table}
\newpage

 We provide the results of computations (the lower bound $\beta_k$ and the upper bound $\alpha_k$) for diverse sizes of matrices, as well as the relative accuracy obtained.  We also mention the maximal length of the products computed in order to reach this accuracy.

\medskip

\subsubsection{Nonnegative matrices with zero columns} In  Fig. \ref{fig-rand15zerolines} we demonstrate applications of the modified lower bound~(\ref{betapos-mod})
for matrices with zero columns, when $\beta_k \equiv -\infty$.

\medskip

\begin{figure}
\centering
\begin{tabular}{cc}
\includegraphics[scale = 0.35]{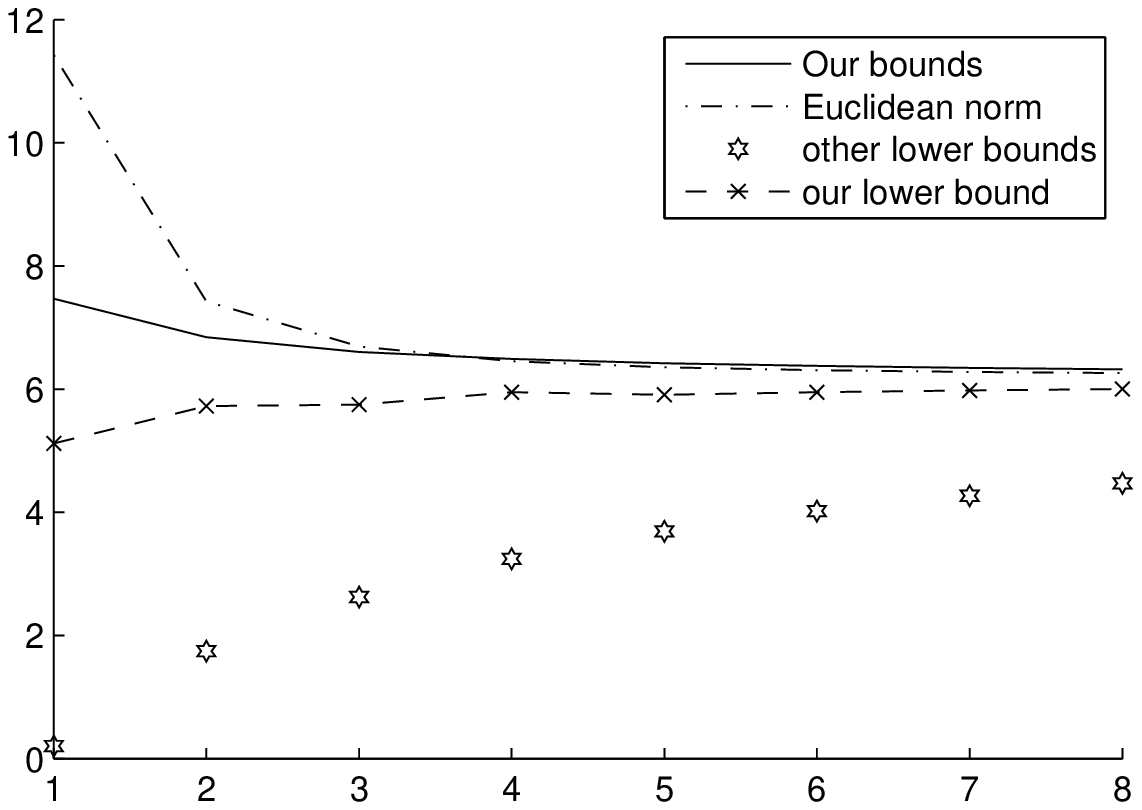}&
\includegraphics[scale = 0.35]{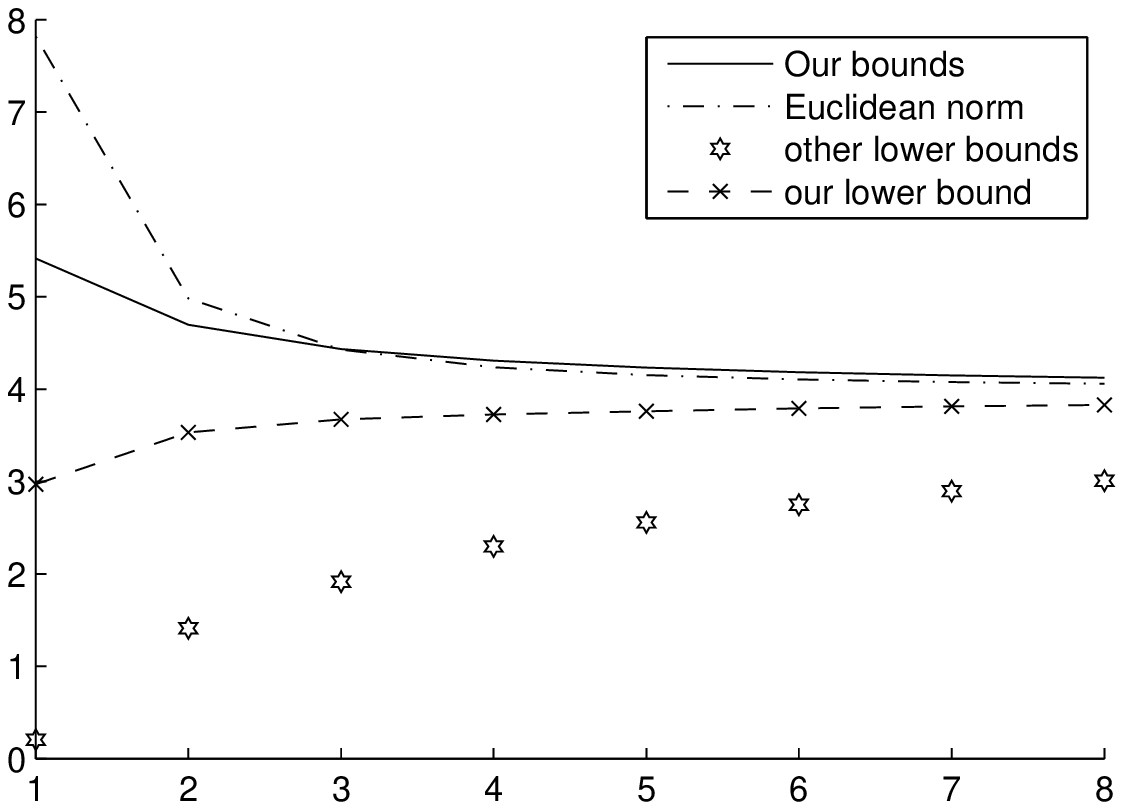} \\(a)&(b)
\end{tabular}
\caption{\footnotesize{Evolution of the different bounds in the situation where some matrices have zero columns. The matrices have dimension $15,$ while $5$ random columns are put to zero.  In the right hand side, the matrices are randomly generated and then sparsified.}} \label{fig-rand15zerolines}
\end{figure}

\subsubsection{Matrices with negative entries} When the matrices do not have only nonnegative entries, no algorithm is known  to squeeze the Lyapunov exponent between a lower and an upper bound.  Of course, the upper iterative estimate~(\ref{euc}) is always available, for any choice of the norm,
  but a good lower estimate converging to $\lambda$ as $k \to \infty$, most likely, does not exist at all (see Section~V).
  The only thing that can be done in this situation is to improve the upper bound. In Proposition~\ref{p20} we showed that
  for every~$k$ the upper bound $\Gamma_k(I)$ is better than~(\ref{euc}) with the Euclidean norm.
Here we compare these bounds for randomly generated matrices, whose entries were all i.i.d. homogeneously between $-0.5$ and $0.5.$  We report in Table \ref{table-nonpositive} the results for $k=1$ for matrices of size $10,20,30,40.$
In Fig. \ref{fig-nonpositive}, we show the accuracy of the two upper bound for a randomly generated $30\times 30$ matrix.  As one can see, $\Gamma_k(I)$ is small already for $k=1,$  while for $k=3$ the Euclidean bound is still far from having converged.

\begin{figure}
\centering
\includegraphics[scale = 0.5]{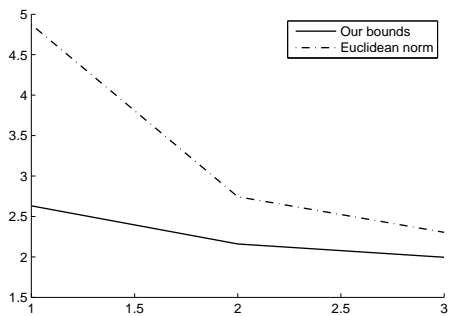}
\caption{\footnotesize{Evolution of our upper bound $\Gamma_k(I)$ versus the classical Euclidean bound for a randomly generated $30\times 30$ matrix with positive and negative entries.}} \label{fig-nonpositive}\end{figure}

\begin{table}
\begin{center}
\begin{tabular}{|c|c|c|}
\hline
size  & our upper bound& Euclidean upper bound \\
\hline
10 &1.5   &2.4 \\
20 &2.2  &4.2 \\
30 &2.7  &4.9 \\
40 &3  &5.7 \\

\hline
\end{tabular}
\caption{Results of our algorithm compared with the Euclidean bound on random pairs of matrices with negative entries. For all matrices, each entry
was drawn i.i.d. at random homogeneously between $-0.5$ and $0.5.$ The algorithms are applied directly on the sets of matrices (i.e. $k=1$).}
\label{table-nonpositive}
\end{center}
\end{table}

\newcommand{\noopsort}[1]{} \newcommand{\singleletter}[1]{#1}

\end{document}